\documentclass[a4paper,11pt]{article}
\usepackage{amsmath,amssymb,amsthm,a4wide,color}
\usepackage[utf8]{inputenc}
\usepackage[T1]{fontenc}
\usepackage{authblk,mathrsfs,empheq}
\usepackage{mathtools}
\usepackage{algorithm,algpseudocode}
\usepackage{graphicx,subcaption}
\usepackage{esint}
\usepackage{multirow}
\usepackage[colorlinks=true,citecolor=blue]{hyperref}
\usepackage{bm}
\usepackage{booktabs}
\usepackage{ragged2e}
\usepackage{caption}
\usepackage{subcaption}
\usepackage{pdfsync}
\usepackage[capitalise,noabbrev]{cleveref}
\usepackage{todonotes}
\input{macro}

\title{\bf Coarse space preconditioning for \\
  Generalized Optimized Schwarz Methods.\\
  Part I: continuous case.}
\author[1]{Xavier Claeys}

\affil[1]{{\small POEMS, CNRS, Inria, ENSTA, Institut Polytechnique de
    Paris, 91120 Palaiseau, France} }
\date{}

\begin{document}

\maketitle

\begin{abstract}  
  \noindent 
  The Generalized Optimized Schwarz Method (GOSM) originally proposed in
  [Claeys, 2021] is a variant of Depr\'es algorithm, a domain decomposition
  strategy for the solution of harmonic wave propagation problems. It
  imposes transmission conditions through interfaces by means of a
  non-local exchange operator. Conducting our analysis at the continuous
  level, in an infinite dimensional setting, we propose a coarse space
  construction for the preconditioning of the GOSM formulation, and provide
  estimates for the convergence of GMRes applied to the preconditioned
  equation.
\end{abstract}

\section*{Introduction}
Large scale simulation of wave propagation problems in harmonic regime
remains a major computational challenge. Although decisive progress has
been achieved for positive definite equations in recent years by means of
multi-level domain decomposition \cite{zbMATH06303461}, harmonic wave
propagation does not properly belong to this class of problems and still
lies at the center of an active research area.

One of the most established domain decomposition (DDM) strategies for wave
propagation rests on an algorithm proposed by Bruno Després
\cite{Despres1991DDM} that subdivides the computational domain into
non-overlapping parts, and solves the wave equation separately in each
subdomain, maintaining a coupling of adjacent subdomains by swapping
ingoing/outgoing Robin unknown traces to enforce transmission
conditions. We shall refer to this approach as \textit{Després
  algorithm}. Based on the observation that the choice of impedance factor
for Robin traces has an important impact on the convergence speed of
Després algorithms, many variants of this approach have been proposed since
then, most of which differ in the way of choosing the impedance factor,
trying to tune it so as to improve convergence of the overall method. We
shall refer to this larger class of methods as \textit{Optimized Schwarz
  Methods} (OSM).

The present contribution focuses on a variant of the Després algorithm,
referred to as Generalized Optimized Schwarz Method (GOSM) developped in a
series of contributions \cite{MR4665035,MR4648528,MR4433119} as a variant
of Després algorithm robust to meshwidth refinement and insensitive to the
presence of cross points in the subdomain partition. It shares many
similarities with pre-existing OSM strategies but departs from them in the
way transmission conditions are imposed. While OSM systematically encodes
transmission conditions by simple (local) swaps of Robin traces at each
interface of the subdomain partition, GOSM relies on a non-local exchange
operator.

\quad\\ Although, by construction, GOSM is robust to mesh refinement, the
later is not the only factor that may harm the convergence of linear
solvers. A deterioration of convergence may also stem from an increase of
the number of subdomains or from (quasi)resonance phenomena inherited from the
underlying continuous wave boundary value problem. A coarse space
correction appears as a natural way of dealing with these issues.

The recent literature on multi-level overlapping Schwarz methods has seen
the developement of scalable and accurate coarse spaces that can cope with
resonance phenomena
\cite{zbMATH08143001,arXiv:2409.06533,UnifiedMa2025,ma2025mmr2l,arXiv:2404.02758,arXiv:2605.31552}. Such
constructions remain valid for wave propagation problems, although the
offline cost of contruction of the coarse spaces themselves may represent a
significant computational bottleneck. These constructions hold for domain
decomposition with overlap, and it is not clear how to transfer them to
non-overlapping DDM.

There also exists a large literature on coarse space corrections for
substructuring non-overlapping domain decomposition, see
\cite{MR3013465,zbMATH06534518,MR2104179}
and references there in. Although most of such strategies are designed for
positive definite problems, a few contributions also deal with
Helmholtz problems 
\cite{zbMATH05729644,zbMATH01455844,zbMATH01488464}. Existing approaches
are not applicable to GOSM though, due to its specific structure.

\quad\\ The present contribution develops a coarse space preconditioning
strategy for GOSM.  Let us recall that OSM and GOSM are systematically
formulated as equations of the form $(\Id + \Pi \mS)\bu = \bff$ where $\bu
= (u_1,\dots,u_N),\bff = (f_1,\dots, f_N)$ are collections of trace
functions defined on the boundaries of subdomains, the operator $\Pi$ is an
involution $\Pi^2 = \Id$ that achieves a coupling between subdomains by
reformulating transmission conditions through interfaces, and $\mS =
\mrm{diag}(\mS_1,\dots, \mS_N)$ is a block diagonal operator that solves
the wave equation in each subdomain. An important observation of the
present contribution is that, in the case of GOSM, the local scattering
maps only differ from a multiple of the identity by a compact perturbation
i.e. $\mS - i\Id$ is compact. At the numerical level, this means that $\mS
- i\Id$ is low-rank compressible. This is a specific feature of GOSM that
is not verified by conventional OSM strategies.  We show how this
observation can be exploited to derive an inverse of the operator $\Id
+\Pi\mS$ up to a compact perturbation which, ultimately, will deliver a
parametrix. All the analysis presented here is at the continuous level,
adopting the same perspective as \cite{MR4665035,zbMATH07381643}. In a
forthcoming contribution, we develop a discrete counterpart detailing how
to exploit this analysis to obtain a concrete preconditioner.

\quad\\ In our analysis, we take GMRes \cite{Saad2003IMS,zbMATH03967793} as
our reference linear solver and establish convergence estimates for this
particular algorithm. This is motivated by the fact that GMRes is nowadays
an established standard Krylov solver for non self-adjoint
systems. However, we expect that similar conclusions also hold for other
linear solvers.

\quad\\ The outline of the present contribution is as follows. In a first
section we fix a specific wave propagation boundary value problem that is
representative of the class of problems we wish to consider. In sections
\ref{sec:funct-spac-oper} and \ref{sec:multi-domain-setting}, we introduce
notations that we will use to reformulate our problem in a multi-domain
setting. In Section \ref{sec:skeleton-formulation}, we describe the DDM
formulation posed on the skeleton of the subdomain partition that will be
the main concern of the present contribution. In Section
\ref{sec:asympt-behav-gmres}, we derive estimates concerning the asymptotic
behaviour of GMRes applied to the skeleton formulation with no
preconditioning. In the last section, we develop a coarse space
preconditioning strategy and derive convergence estimate for 
GMRes applied to the preconditioned system.

\section{Wave propagation problem}

We consider a typical boundary value problem modelling wave propagation in
a bounded cavity.  Let $\Omega\subset \RR^d$ refer to a bounded Lipschitz
open set. Consider a source term $f\in \mL^{2}(\Omega)$, a frequency
parameter $\kappa>0$, and an impedance parameter $\kappa_{*}\geq 0$. We shall
discuss the solution to the following problem
\begin{equation}\label{eq:3}
  \begin{aligned}
    & \text{Find}\;u\in \mH^{1}(\Omega)\;\text{such that}\\
    & \Delta u + \kappa^2 u = - f\quad \text{in}\;\Omega\\
    & \partial_{\bn}u -i\kappa_{*} u = 0\quad \text{on}\;\partial\Omega.
  \end{aligned}
\end{equation}
where $\partial_{\bn}u = \bn\cdot\nabla u$ and $\bn$ is the unit vector
field normal to $\partial\Omega$.  We take \eqref{eq:3} as a prototypical
problem. However what we will describe in the present contribution could
apply to a wider family of problems, following the argumentation provided
in \cite{MR4665035}, see in particular Section 3 of this reference.
Nevertheless, from now on, we will focus on \eqref{eq:3} and discuss in
depth a domain decomposition approach for the solution to this problem.
Problem \eqref{eq:3} can be cast into the following traditional variational
form: find $u\in \mH^{1}(\Omega)$ such that $a(u,v) = \ell(v)\forall v\in
\mH^{1}(\Omega)$ where $\ell(v) = \int_{\Omega} f \overline{v} \;d\bx$ and
\begin{equation}\label{eq:24}
  a(u,v):= \int_{\Omega}\nabla u \nabla \overline{v}
  -\kappa^{2}u\overline{v}\,d\bx -i\kappa_{*}\int_{\partial\Omega}
  u\overline{v} \,d\bs.
\end{equation}
where $d\bs$ refers to the surface measure on $\partial\Omega$ naturally
induced by the volume Lebesgue measure $d\bx$. We will assume that this
problem admits a unique solution which is equivalent to its inf-sup
constant being strictly positive
\begin{equation}\label{eq:14}
  \alpha :=  \mathop{\inf\phantom{p}}_{u\in
    \mH^{1}(\Omega)\setminus\{0\}}\sup_{v\in \mH^{1}(\Omega)\setminus\{0\}}
  \frac{\vert a(u,v)\vert}{\Vert u\Vert_{\mH^{1}(\Omega)} \Vert v\Vert_{\mH^{1}(\Omega)}}>0.
\end{equation}

\section{Function spaces and operators}\label{sec:funct-spac-oper}
We fix here a few notations related to function spaces and
operators. Whenever $\mH$ refers to a Hilbert space over $\CC$, we shall
denote $\mH'$ its dual, and we shall denote $u,p\mapsto \langle u,p\rangle$
the corresponding duality pairing with no complex conjugation involved.
If $\mL:\mH\to\mV$ is a bounded linear maps between two Hilbert spaces,
we shall denote $\mL^{*}:\mV'\to \mH'$ its adjoint, defined by
$\langle \mL(u),p\rangle = \langle u,\mL^*(p)\rangle $ for all $u\in
\mH,p\in\mV'$. 

\quad\\
In a domain $\Omega\subset \RR^d$, we shall consider the Sobolev space
$\mH^{1}(\Omega):= \{v\in \mL^{2}(\Omega),\; \nabla v\in \mL^{2}(\Omega)\}$
equipped with the scalar product $\mA_{\Omega,+}:\mH^{1}(\Omega)\to
\mH^{1}(\Omega)'$ and the corresponding norm defined as
\begin{equation}\label{eq:25}
\begin{aligned}
  &  \langle \mA_{\Omega,+}(u),\overline{v}\rangle:= \int_{\Omega}\nabla
  u\cdot\nabla \overline{v} + \kappa^{2}u\overline{v} \,d\bx\\
  &  \Vert v\Vert_{\mH^{1}(\Omega)}^{2}:= \Vert v\Vert_{\mA_{\Omega,+}}^{2}=
  \Vert \nabla v\Vert_{\mL^{2}(\Omega)}^{2} + \kappa^{2}\Vert v\Vert_{\mL^{2}(\Omega)}^{2}.
\end{aligned}
\end{equation}
Although this is rather obvious, we insist that this scalar product shares
similarities with the bilinear form \eqref{eq:24} of the Helmholtz problem
with the important difference of the + sign in front of the $\kappa^2$
term.  We will denote $\mH^{1}_{0}(\Omega)$ the closure of
$\mathscr{C}^{\infty}_{0}(\Omega):= \{ \varphi\vert_{\Omega},\;
\varphi\in\mathscr{C}^{\infty}(\RR^d),\;\supp(\varphi)\subset \Omega\}$.
The space of Dirichlet traces on the boundary of $\Omega$ forms a Hilbert
space defined by
\begin{equation}
\begin{aligned}
  & \mH^{1/2}(\partial\Omega):= \{
  v\vert_{\partial\Omega},\;v\in\mH^{1}(\Omega)\}\\
  & \Vert v\Vert_{\mH^{1/2}(\partial\Omega)}:=\inf\{\Vert
  v+v_0\Vert_{\mH^{1}(\Omega)}, v_0\in \mH^{1}_0(\Omega)\}.
\end{aligned}
\end{equation}
The norm of the Dirichlet trace space does depend on $\kappa$ through its
relation to $\Vert \cdot\Vert_{\mH^{1}(\Omega)}$.  The Dirichlet trace
operator $\varphi\mapsto \varphi\vert_{\partial\Omega}$ which is well
defined on $\mathscr{C}^{\infty}(\overline{\Omega}):=\{
\varphi\vert_{\Omega},\; \varphi\in\mathscr{C}^{\infty}(\RR^d)\}$, extends
to a bounded surjective map $\mH^{1}(\Omega)\to
\mH^{1/2}(\partial\Omega)$ which we note
\begin{equation}\label{eq:4}
  \begin{aligned}
    & \mB_{\Omega}:\mH^{1}(\Omega)\to \mH^{1/2}(\partial\Omega)\\
    & \mB_{\Omega}(\varphi):= \varphi\vert_{\partial\Omega}\quad\forall
    \varphi\in \mathscr{C}^{\infty}(\overline{\Omega}). 
  \end{aligned}
\end{equation}
We also need to consider the dual space $\mH^{-1/2}(\partial\Omega):=
\mH^{1/2}(\partial\Omega)'$ equipped with the canonical dual norm
\begin{equation}\label{eq:5}
  \Vert p\Vert_{\mH^{-1/2}(\partial\Omega)}:= \sup_{v\in
    \mH^{1/2}(\partial\Omega)\setminus\{0\}}\vert \langle
    p,v\rangle\vert/\Vert v\Vert_{\mH^{1/2}(\partial\Omega)}.
\end{equation}
Denote $\mH^{1}(\Delta,\Omega):= \{ v\in \mH^{1}(\Omega), \Delta
v\in\mL^{2}(\Omega)\}$ equipped with the norm $v\mapsto \Vert \Delta
v\Vert_{\mL^{2}(\Omega)} + \Vert v\Vert_{\mH^{1}(\Omega)}$.  Recall that
the Neumann trace operator $\varphi\mapsto \bn\cdot\nabla
\varphi\vert_{\partial\Omega}$ extends naturally to a bounded surjective
linear map $\mH^{1}(\Delta,\Omega)\to \mH^{-1/2}(\partial\Omega)$.  The
norm \eqref{eq:5} stems from a scalar product which can be described in
terms of the Dirichlet-to-Neumann\footnote{also sometimes referred to as
Stecklov-Poincaré operator} map
$\mT_{\Omega,+}:\mH^{1/2}(\partial\Omega)\to \mH^{-1/2}(\partial\Omega)$
defined by
\begin{equation}
  \begin{aligned}
    & \mT_{\Omega,+}(\phi\vert_{\partial\Omega}):= \bn_{\Omega}\cdot\nabla
    \phi\vert_{\partial\Omega}\\ & \forall \phi\in
    \mH^{1}(\Delta,\Omega)\;\text{satisfying}\\ & -\Delta
    \phi+\kappa^{2}\phi = 0\quad \text{in}\;\Omega.
  \end{aligned}
\end{equation}
where $\bn_{\Omega}$ refers to the unit vector field at $\partial\Omega$
pointing toward the exterior of $\Omega$. With this notation we have the
following identities
\begin{equation}
  \begin{aligned}
    & \Vert v\Vert_{\mH^{+1/2}(\partial\Omega)}^2= \Vert
    v\Vert_{\mT_{\Omega,+}}^{2}:= \langle \mT_{\Omega,+}(v), \overline{v}\rangle \\
    & \Vert p\Vert_{\mH^{-1/2}(\partial\Omega)}^2= \Vert
    p\Vert_{\mT_{\Omega,+}^{-1}}^{2}:= \langle \mT_{\Omega,+}^{-1}(p), \overline{p}\rangle \\
  \end{aligned}
\end{equation}
The scalar products induced by $\mT_{\Omega,+}^{\pm 1}$ shall stand as reference
scalar products for Dirichlet and Neumann traces in the following. Note
that, by its very definition, $\mT_{\Omega,+}^{-1}(p) = \mB_{\Omega}(u)$
where $\mA_{\Omega,+}(u) = \mB_{\Omega}^{*}(p)$. In other words, we have
the identity
\begin{equation}\label{eq:12}
  \mT_{\Omega,+}^{-1} = \mB_{\Omega}\mA_{\Omega,+}^{-1} \mB_{\Omega}^{*}. 
\end{equation}
In the forthcoming sections, we shall notations similar to the ones
introduced above, mutatis mutandis, substituting $\Omega$ by some other
bounded Lipschitz open set $\omega$, and writing $\mH^{\pm
  1/2}(\partial\omega),\mB_{\omega},\mT_{\omega,+},\dots,etc$.

\section{Multi-domain setting}\label{sec:multi-domain-setting}

Next we introduce notations to describe the domain decomposition
formulation we wish to study for Problem \eqref{eq:3}. We need to introduce
a decomposition of the computational domain into non-overlapping Lipschitz
subdomains $\Omega_j\subset \Omega, j=1\dots, \mJ$
\begin{equation}  
  \begin{aligned}
    & \overline{\Omega} = \overline{\Omega}_1\cup \dots\cup \overline{\Omega}_\mJ\\
    & \text{with}\quad \Omega_j\cap \Omega_k = \emptyset\;\text{for}\;j\neq k,\\
    & \text{and}\quad \Sigma:=\partial\Omega_1\cup \dots \cup
    \partial\Omega_\mJ\\
    & \text{and}\quad \Gamma_j := \partial\Omega_j
  \end{aligned}
\end{equation}
Next we introduce a few notations related to function spaces and
operators attached to each subdomain. Natural function spaces related to
the decomposition consist in cartesian products 
\begin{equation}
  \begin{aligned}
    & \mbH(\Omega) :=
    \mH^{1}(\Omega_1)\times\cdots\times\mH^{1}(\Omega_\mJ)\;\text{equipped with}\\
    & \Vert u\Vert_{\mA_+}^{2} = \langle \mA_+(u),\overline{u}\rangle =
    \sum_{j=1,\dots,\mJ}\langle \mA_{\Omega_j,+}(u_j),\overline{u}_j\rangle
  \end{aligned}
\end{equation}
for all tuples $u = (u_1,\dots,u_\mJ)\in \mbH(\Omega)$ where $\mA_+ :=
\mrm{diag}(\mA_{\Omega_1,+},\dots, \mA_{\Omega_\mJ,+})$ is a block diagonal
scalar product $\mA_+: \mbH(\Omega)\to \mbH(\Omega)'$.  The space
$\mbH(\Sigma)$ is naturally equipped with a scalar product obtained by
grouping local scalar products
$\mT_{\Omega_j,+}:\mH^{1/2}(\partial\Omega_j)\to
\mH^{-1/2}(\partial\Omega_j)$ into a block diagonal operator
\begin{equation}
  \begin{aligned}
    & \mbH(\Sigma) :=
    \mH^{\frac{1}{2}}(\partial\Omega_1)\times\cdots\times\mH^{\frac{1}{2}}(\partial\Omega_\mJ)\;
    \text{equipped with}\\
    & \Vert p\Vert_{\mT_+}^{2} = \langle \mT_+(p),\overline{p}\rangle  =
    \sum_{j=1,\dots,\mJ}\langle \mT_{\Omega_j,+}(p_j),\overline{p}_j\rangle.
  \end{aligned}
\end{equation}
for all tuples $p = (p_1,\dots,p_\mJ)\in \mbH(\Sigma)$ where $\mT_+ :=
\mrm{diag}(\mT_{\Omega_1,+},\dots, \mT_{\Omega_\mJ,+})$ is a block diagonal
scalar product $\mT_+: \mbH(\Sigma)\to \mbH(\Sigma)'$.  Next we introduce
the multi-domain trace operator $\mB:\mbH(\Omega)\to \mbH(\Sigma)$ that,
likewise, is obtained by grouping all local Dirichlet trace operators
$\mB_{\Omega_j}:\mH^{1}(\Omega_j)\to \mH^{1/2}(\partial\Omega_j)$ into a
block diagonal one
\begin{equation}
  \mB:= \mrm{diag}(\mB_{\Omega_1},\dots, \mB_{\Omega_\mJ}).  
\end{equation}
By the very definition of the norm that we chose for
$\mH^{1/2}(\partial\Omega_j)$, we have $\Vert \mB\Vert_{\mbH(\Omega)\to
  \mbH(\Sigma)} = 1 = \Vert \mB^*\Vert_{\mbH(\Sigma)'\to
  \mbH(\Omega)'}$. Consider the space of (single valued) trace functions
defined on the skeleton of the partition
\begin{equation}
  \begin{aligned}
    & \mH^{1/2}(\Sigma):= \{u\vert_{\Sigma},\;u\in\mH^{1}(\Omega) \}\\
    & \Vert v\Vert_{\mH^{1/2}(\Sigma)}:= \inf\{\Vert
    u\Vert_{\mH^{1}(\Omega)}, u\vert_{\Sigma} = v\}
  \end{aligned}
\end{equation}
Finally we introduce restriction operators $\mathcal{R}:\mH^{1}(\Omega)\to
\mbH(\Omega)$ and $\mR:\mH^{1/2}(\Sigma)\to \mbH(\Sigma)$ that encode
Dirichlet tranmission conditions by mapping globally defined functions to
tuples formed by their restrictions to subdomains,
\begin{equation}
  \begin{aligned}
    & \mR(v) := (v\vert_{\Gamma_1},\dots,v\vert_{\Gamma_\mJ}),\\
    & \mathcal{R}(u):= (u\vert_{\Omega_1},\dots,u\vert_{\Omega_\mJ}).
  \end{aligned}
\end{equation}

\section{Skeleton formulation}\label{sec:skeleton-formulation}

Based on the multi-domain setting that introduced above, we shall now
derive the domain decomposition under study. We need first to
reformulate \eqref{eq:3}. Define $\mA:\mbH(\Omega)\to \mbH(\Omega)'$ as the
following block-diagonal operator
\begin{equation}\label{eq:8}
  \begin{aligned}
    & \mA = \mrm{diag}(\mA_{\Omega_1},\dots,\mA_{\Omega_\mJ})\quad \text{where}\\
    & \langle \mA_{\Omega_j}(u),v\rangle:= \int_{\Omega_j}\nabla u \nabla v
    -\kappa^{2}u v\,d\bx -i\kappa_{*}\int_{\partial\Omega\cap \partial\Omega_j} u v \,d\bs.
  \end{aligned}
\end{equation}
With this definition, we have $a(u,v) = \langle
\mA\mathcal{R}(u),\mathcal{R}(\overline{v})\rangle$ for all $u,v\in \mH^{1}(\Omega)$.
Similarly define $\boldsymbol{\ell} = (\ell_1,\dots,\ell_{\mJ})\in
\mbH(\Omega)'$ by $\langle \ell_j, v\rangle:= \int_{\Omega_j}f v\,d\bx$, so that $\ell(v) =
\langle\boldsymbol{\ell},\mathcal{R}(v)\rangle$ for all $v\in \mH^{1}(\Omega)$. With these
notations, the boundary value problem \eqref{eq:3} can be rewritten as:
find $u\in\mH^{1}(\Omega)$ such that 
\begin{equation}\label{eq:6}
  \mathcal{R}^*\mA\mathcal{R}(u) = \mathcal{R}^*(\boldsymbol{\ell}).
\end{equation}
Following the analysis presented for example in
\cite{zbMATH07381643,MR4665035}, Equation \eqref{eq:6} can be recast in
terms of boundary impedance traces.  Such a reformulation rests on three
ingredients.

\begin{itemize}
\item[\textbf{a)}] \textbf{Impedance operator} This is a positive definite self-adjoint
  operator $\mT:\mbH(\Sigma)\to \mbH(\Sigma)'$ that only differs from
  $\mT_+$ by a compact perturbation i.e. $\mT-\mT_+$ is compact. As a
  consequence of Fredholm alternative, the norms $p\mapsto \Vert
  p\Vert_{\mT} := \sqrt{\langle \mT(p),\overline{p}\rangle}$ and $p\mapsto
  \Vert p\Vert_{\mT_+}$ are equivalent. We shall also consider the norm
  $\Vert \cdot\Vert_{\mT^{-1}}$ on $\mbH(\Sigma)'$ defined by
  \begin{equation}
    \Vert p\Vert_{\mT^{-1}}^2:= \langle \mT^{-1}(p),\overline{p}\rangle.
  \end{equation}
  We shall further assume that this impedance operator is subdomainwise
  block-diagonal $\mT = \mrm{diag}(\mT_{\Omega_1},\dots,\mT_{\Omega_\mJ})$
  where each $\mT_{\Omega_j}:\mH^{1/2}(\partial\Omega_j)\to
  \mH^{-1/2}(\partial\Omega_j)$ is positive definite self-adjoint and only
  differs from $\mT_{\Omega_j,+}$ by a compact perturbation.
\item[\textbf{b)}] \textbf{Non-local exchange operator} This is a bounded
  map $\Pi:\mbH(\Sigma)'\to \mbH(\Sigma)'$ that maintains a non-local
  coupling between all subdomains and defined by the explicit formula
  \begin{equation}
    \Pi := 2\mT\mR(\mR^*\mT\mR)^{-1}\mR^* -\Id
  \end{equation}
  The operator $\mT\mR(\mR^*\mT\mR)^{-1}\mR^*$ is the $\mT^{-1}$-orthogonal
  projection onto $\mrm{Im}(\mR)$ so that $\Pi^2 = \Id$. An
  important feature is \cite[Prop. 4.3]{MR4665035}
  \begin{equation}
    \Vert \Pi(\bp)\Vert_{\mT^{-1}} = \Vert \bp\Vert_{\mT^{-1}}
    \quad\forall \bp\in\mbH(\Sigma)'
  \end{equation}
  i.e. this is an isometric operator.  A more detailed description of this
  operator and its relation to transmission conditions can be found in
  \cite[Sec.4]{MR4665035}.
  
\item[\textbf{c)}] \textbf{Local scattering operator} This is a bounded map
  $\mS:\mbH(\Sigma)'\to \mbH(\Sigma)'$ that acts independently in each
  subdomain i.e. it is block diagonal $\mS =
  \mrm{diag}(\mS_{\Omega_1},\dots,\mS_{\Omega_\mJ})$ where each
  $\mS_{\Omega_j}: \mH^{-1/2}(\partial\Omega_j)\to
  \mH^{-1/2}(\partial\Omega_j)$ is a bounded map.  Locally, it takes an
  ingoing impedance trace as input, solves the wave equation, and returns
  the corresponding outgoing trace. It is explicitely defined by
  \begin{equation}\label{eq:11}
    \mS := 2i\mT\mB(\mA-i\mB^*\mT\mB)^{-1}\mB^*+\Id
  \end{equation}
  Because each operator of the formula in the right hand side above is
  block diagonal, the scattering map $\mS$ is itself subdomain-wise block
  diagonal, which is a desirable feature in the perspective of large scale
  distributed memory computations. A further important feature is its
  contractivity   \cite[Prop.7.2]{MR4665035}.
  \begin{equation}\label{eq:7}
    \Vert \mS(\bp)\Vert_{\mT^{-1}} \leq \Vert \bp\Vert_{\mT^{-1}}
    \quad\forall \bp\in\mbH(\Sigma)'    
  \end{equation}
  which reflects conservation of energy. In the case
  where $\kappa_*=0$ i.e. pure Neumann boundary condition in \eqref{eq:3},
  the operator $\mA$ given by \eqref{eq:8} is self-adjoint and the
  scattering operator is an isometry $\Vert \mS(\bp)\Vert_{\mT^{-1}} =
  \Vert \bp\Vert_{\mT^{-1}}$. A more detailed description of $\mS$ can
  be found e.g. in \cite[Sec.7]{MR4665035}.
  
\end{itemize}
Defining $\bff:= -2i\Pi\mT\mB(\mA-i\mB^*\mT\mB)^{-1}\boldsymbol{\ell}$, 
the skeleton formulation associated to the domain decomposition strategy
that we wish to investigate now writes
\definecolor{grisclair}{gray}{0.9}
\begin{empheq}[box={\setlength{\fboxsep}{10pt}\colorbox{grisclair}}]{equation}\label{eq:9}
  \begin{aligned}
    & \text{Find}\;\bp\in \mbH(\Sigma)'\;\text{such that}\\
    & (\Id + \Pi\mS)\bp = \bff
  \end{aligned}
\end{empheq}
%% \begin{equation}\label{eq:9}
%%   \begin{aligned}
%%     & \text{Find}\;\bp\in \mbH(\Sigma)'\;\text{such that}\\
%%     & (\Id + \Pi\mS)\bp = \bff
%%   \end{aligned}
%% \end{equation}
The equation above is equivalent to \eqref{eq:8} and hence to the boundary
value problem \eqref{eq:3} in the following sense, see
\cite[Prop.8.1]{MR4665035}.
\begin{lem}\quad\\
  Define $\bff:= -2i\Pi\mT\mB(\mA-i\mB^*\mT\mB)^{-1}\boldsymbol{\ell}$.  A
  tuple of traces $\bp\in \mbH(\Sigma)'$ solves \eqref{eq:9} if and only if
  $\mathcal{R}(u) = (\mA-i\mB^*\mT\mB)^{-1}(\mB^*\bp+\boldsymbol{\ell})$
  where $u$ solves \eqref{eq:3}.
\end{lem}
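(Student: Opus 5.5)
The plan is a direct computation that converts \eqref{eq:9} into transmission conditions through the decomposition induced by $\Pi$. Set $\mP := \mT\mR(\mR^*\mT\mR)^{-1}\mR^*$, so that $\Pi = 2\mP-\Id$. Since $\mP$ is a projection with $\mrm{Im}(\mP) = \mT(\mrm{Im}\,\mR)$ and $\mrm{Ker}(\mP) = \mrm{Ker}(\mR^*)$, we have $\mbH(\Sigma)' = \mT(\mrm{Im}\,\mR)\oplus \mrm{Ker}(\mR^*)$. Moreover $\Id+\Pi = 2\mP$ and $\Id-\Pi = 2(\Id-\mP)$. For an arbitrary $\bp\in\mbH(\Sigma)'$, introduce
\[
w := (\mA-i\mB^*\mT\mB)^{-1}(\mB^*\bp+\boldsymbol{\ell})\in\mbH(\Omega),
\]
which is well defined because $\mA-i\mB^*\mT\mB$ is invertible (this underlies \eqref{eq:11}). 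By linearity, \eqref{eq:11} and the definition of $\bff$ give $\mS\bp + 2i\mT\mB(\mA-i\mB^*\mT\mB)^{-1}\boldsymbol{\ell} = \bp + 2i\mT\mB w$. Hence
\[
(\Id+\Pi\mS)\bp - \bff = \bp + \Pi(\bp+2i\mT\mB w).
\]

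Next I will write $\bg := \bp + i\mT\mB w$ and $\bh := i\mT\mB w$, so that $\bp = \bg-\bh$ and $\bp+2i\mT\mB w = \bg+\bh$. Then \eqref{eq:9} reads $(\Id+\Pi)\bg = (\Pi-\Id)\bh$, that is $\mP\bg = -(\Id-\mP)\bh$. The two sides lie in complementary subspaces, so this is equivalent to the pair of conditions
\[
\mR^*\bg = 0 \quad\text{and}\quad \mB w\in \mrm{Im}(\mR).
\]
For the first condition I use that $(\mR^*\mT\mR)^{-1}$ is injective, so $\mP\bg=0$ iff $\mR^*\bg=0$. For the second, $(\Id-\mP)\bh = 0$ iff $\bh\in\mT(\mrm{Im}\,\mR)$, and $\mT$ is injective. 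In addition, the definition of $w$ rewrites as $\mA w = \boldsymbol{\ell} + \mB^*(\bp + i\mT\mB w) = \boldsymbol{\ell}+\mB^*\bg$.

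For the direction \eqref{eq:9} $\Rightarrow$ \eqref{eq:3}, the condition $\mB w = \mR(v)$ with $v\in\mH^{1/2}(\Sigma)$ means that the local traces of the $w_j$ agree on the interfaces. By the standard gluing lemma for $\mH^1$ functions on Lipschitz partitions, $w = \mathcal{R}(u)$ for some $u\in\mH^1(\Omega)$. For any $v\in\mH^1(\Omega)$ we have $\mB\mathcal{R}(v) = \mR(v\vert_\Sigma)$, and therefore
\[
a(u,\overline{v}) = \langle \mA\mathcal{R}u,\mathcal{R}v\rangle = \langle\boldsymbol{\ell},\mathcal{R}v\rangle + \langle \mR^*\bg, v\vert_\Sigma\rangle = \ell(\overline v),
\]
so $u$ solves \eqref{eq:3}.

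For the converse, given the solution $u$ of \eqref{eq:3}, set $w:=\mathcal{R}(u)$. The functional $\mA w-\boldsymbol{\ell}$ vanishes on $\mathrm{Ker}(\mB) = \prod_j\mH^1_0(\Omega_j)$, because \eqref{eq:3} holds in each subdomain. Since $\mB$ is surjective, $\mB^*$ has closed range equal to the annihilator of $\mathrm{Ker}\,\mB$, so $\mA w-\boldsymbol{\ell} = \mB^*\bg$ for a unique $\bg$. Testing against $\mathcal{R}(\tilde v)$, where $\tilde v\in\mH^1(\Omega)$ extends an arbitrary trace in $\mH^{1/2}(\Sigma)$, and using \eqref{eq:3} gives $\mR^*\bg=0$. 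Setting $\bp:=\bg-i\mT\mB w$, the identity $\mA w = \boldsymbol{\ell}+\mB^*\bg$ rearranges to $(\mA-i\mB^*\mT\mB)w=\mB^*\bp+\boldsymbol{\ell}$, so $w$ is the tuple attached to $\bp$ in the statement. Both conditions above hold, hence $\bp$ solves \eqref{eq:9}. Uniqueness of the solution $u$ of \eqref{eq:3}, guaranteed by $\alpha>0$, makes the correspondence exact.

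The algebra is routine. The step needing the most care is the functional-analytic input: the gluing characterization $\{w: \mB w\in\mrm{Im}\,\mR\} = \mathcal{R}(\mH^1(\Omega))$, and the identification $\mrm{Im}(\mB^*) = (\mathrm{Ker}\,\mB)^\circ$. I would justify both from the surjectivity of the trace maps and the definition of $\mH^{1/2}(\Sigma)$ as traces of global $\mH^1$ functions.
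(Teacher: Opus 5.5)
Your proof is correct. The paper gives no argument of its own for this lemma; it imports it from \cite[Prop.8.1]{MR4665035}. The mechanism behind that result is the one you reconstruct: with $\bg = \bp + i\mT\mB w$, the exchange relation $\bp = -\Pi(\bg + i\mT\mB w)$ holds exactly when $(\mB w,\bg)\in\mrm{Im}(\mR)\times\mrm{Ker}(\mR^*)$, i.e.\ Dirichlet continuity and flux balance across $\Sigma$, after which the local solutions are glued. So your proposal is a self-contained version of the cited proof rather than a different route.

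A few details to tidy:

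\begin{itemize}
\item Substituting $\bp = \bg-\boldsymbol{h}$ into $\bp+\Pi(\bg+\boldsymbol{h}) = 0$ gives $(\Id+\Pi)\bg = (\Id-\Pi)\boldsymbol{h}$, not $(\Pi-\Id)\boldsymbol{h}$. This is harmless, since both sides must vanish anyway.

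\item For the equivalence of $\mT\mR(\mR^*\mT\mR)^{-1}\mR^*\bg = 0$ with $\mR^*\bg=0$, injectivity of $(\mR^*\mT\mR)^{-1}$ alone does not suffice; you would also need injectivity of $\mT\mR$. It is simpler to apply $\mR^*$ and use $\mR^*\mT\mR(\mR^*\mT\mR)^{-1}\mR^* = \mR^*$.

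\item Your converse constructs a solution $\bp$ from $u$. The statement instead starts from a given $\bp$ with $(\mA-i\mB^*\mT\mB)^{-1}(\mB^*\bp+\boldsymbol{\ell}) = \mathcal{R}(u)$. That case is even shorter. Put $\bg := \bp+i\mT\mB w$ and observe that $\mB w = \mR(u\vert_{\Sigma})$. Your testing argument against $\mathcal{R}(\tilde v)$ then yields $\mR^*\bg = 0$ directly.
\end{itemize}

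With this, the closed-range identity $\mrm{Im}(\mB^*) = (\mrm{Ker}\,\mB)^{\circ}$ is needed only for the additional fact that such a $\bp$ exists. That fact is true but not asserted by the lemma. Injectivity of $\mB^*$ then shows this $\bp$ is unique.
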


\noindent 
Formulation \eqref{eq:9} appears well suited to linear solvers because its
operator $\Id + \Pi\mS$ is coercive. Indeed applying \cite[Thm.9.2 \&
  Cor.9.3]{zbMATH07381643}, and adapting the proof of \cite[Cor.8.4 \&
  Prop.10.4]{MR4433119}, yields the following coercivity bound
\begin{equation}\label{eq:26}
  \begin{aligned}
   & \inf_{\bp\in \mbH(\Sigma)'}\frac{\Re \{\langle
      \mT^{-1}(\Id+\Pi\mS)\bp,\overline{\bp}\rangle\}}{
      \Vert \bp\Vert_{\mT^{-1}}^{2}}\geq \frac{\gamma_{\star}^2}{2}\quad\text{where} \\
    \gamma_{\star} := 
    & \inf_{\bp\in \mbH(\Sigma)'}\frac{\Vert
      (\Id+\Pi\mS)\bp\Vert_{\mT^{-1}}}{\Vert \bp\Vert_{\mT^{-1}}}\geq
      \frac{\sqrt{2}\alpha}{(\lambda_{\max})^2+(2\Vert a\Vert/\lambda_{\min})^2}
  \end{aligned}
\end{equation}
In the last bound above, the term $\Vert a\Vert$ refers to the continuity
modulus of the bilinear form $a(\cdot,\cdot)$ with respect to the
$\mH^1$-norm \eqref{eq:25}, the coefficient $\alpha$ is given by
\eqref{eq:14}, and $\lambda_{\max},\lambda_{\min}$ refer to the extreme
eigenvalues
\begin{equation*}
  \lambda_{\max} := \max_{\bv\in\mbH(\Sigma)\setminus\{0\} }\frac{\Vert
    \bv\Vert_{\mT_{\phantom{+}}}}{\Vert \bv\Vert_{\mT_+}}\quad\text{and}\quad
    \lambda_{\min} := \min_{\bv\in\mbH(\Sigma)\setminus\{0\} }\frac{\Vert
  \bv\Vert_{\mT_{\phantom{+}}}}{\Vert \bv\Vert_{\mT_+}}.
\end{equation*}
From Estimate \eqref{eq:26}, since both $\Pi$ and $\mS$ are contractions in
the norm $\Vert \cdot\Vert_{\mT^{-1}}$, their continuity modulus is less
than $1$, which implies that $\gamma_\star^{2}/2\leq 2$, and this rewrites
$\gamma_\star\leq 2$.

\section{Asymptotic behaviour of GMRes}\label{sec:asympt-behav-gmres}

In the present section we wish to discuss an effective solution strategy
for \eqref{eq:9}. The operator $\Id + \Pi\mS$ is coercive but not
self-adjoint, which suggests using a GMRes solver. For a complete
presentation of GMRes, we refer to \cite{zbMATH03967793},
\cite[\S6.5]{Saad2003IMS} and \cite[Chap.2]{zbMATH06385506}.

Of course, in the present case, we are considering an abstract infinite
dimensional problem, whereas most of the literature considers GMRes and
Krylov solvers in a finite dimensional setting. However, we wish to analyze
fundamental trends of Krylov solvers grounded in the properties of the
continuous equation \eqref{eq:9}, leveraging compactness of certain parts
of the formulation, which can only be achieved in an infinite dimensional
setting (in finite dimension, all operators are compact). This is why we
adopt a perspective on GMRes and Krylov solvers close to
\cite{zbMATH05080488,zbMATH00556557}.

\quad\\
When applied to \eqref{eq:9}, taking account of the scalar product induced
by $\mT^{-1}$, GMRes proceeds as follows.  Consider an initial guess
$\bp_0\in \mbH(\Sigma)'$, and denote $\br_0:=\bff-(\Id+\Pi\mS)\bp_0$ the
corresponding residuum.  Denote $\mathscr{K}_{n}(\br_0):=
\mrm{span}_{k=0,\dots,n-1}\{ (\Id+\Pi\mS)^{k}\br_0\}$ the associated Krylov
space of order $n$. With these notations, the $n$-th iteration of GMRes
computes the element $\bp_n\in \bp_0 + \mathscr{K}_{n}(\br_0)$ satisfying
\begin{equation}\label{eq:10}
  \begin{aligned}
    \Vert \bff - (\Id+\Pi\mS)\bp_n\Vert_{\mT^{-1}}
    & = \min_{\bz\in \bp_0 + \mathscr{K}_{n}(\br_0)}\Vert \bff -
    (\Id+\Pi\mS)\bz\Vert_{\mT^{-1}}\\
    & = \min_{Q\in \mathbb{P}_n\lbrack X\rbrack, Q(0) = 1}\Vert Q(\Id+\Pi\mS)\br_0\Vert_{\mT^{-1}}
  \end{aligned}
\end{equation}
where $\mathbb{P}_n\lbrack X\rbrack$ refers to the space of polynomials of
degree $n$. Setting $\br_n = \bff - (\Id+\Pi\mS)\bp_n$, we know that
$\lim_{n\to \infty} \Vert \br_n\Vert_{\mT^{-1}} = 0$ and we can estimate
the rate of decay of the residual by means of \eqref{eq:26}: since
$\Vert (\Id + \Pi\mS)\bp\Vert_{\mT^{-1}}\leq 2\Vert \bp\Vert_{\mT^{-1}}$,
Elman's bound \cite{ElmanPhDThesis,zbMATH03831185} yields
\begin{equation}\label{eq:28}
  \frac{\Vert \br_n\Vert_{\mT^{-1}}}{\Vert \br_0\Vert_{\mT^{-1}}}\leq
  \Big(1-\Big(\frac{\gamma_\star}{2}\Big)^2\Big)^{n}
\end{equation}

\subsection{Superlinear convergence}
Although \eqref{eq:28} already shows that $\lim_{n\to \infty}\Vert
\br_n\Vert_{\mT^{-1}} = 0$ with at least geometric convergence, the
structure of $\Id+\Pi\mS$ can be further exploited to describe the
asymptotic decay of the residual. For that purpose we establish a few
elementary results. As an intermediate step, substitute $\mA$ by
$\mA_{+}$ and $\mT$ by $\mT_+$ in the expression \eqref{eq:11} of the
scattering map. This yields nothing but a simple multiple of the identity.

\begin{lem}\label{sec:expl-deriv-param}\quad\\
  $2i\mT\mB(\mA_{+}-i\mB^*\mT_+\mB)^{-1}\mB^*+\Id = i\Id$.
\end{lem}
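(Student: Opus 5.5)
The identity is to be read with $\mT_+$ in place of $\mT$ throughout, as announced just before the statement: substituting $\mA\to\mA_+$ and $\mT\to\mT_+$ in \eqref{eq:11} gives the claim $2i\mT_+\mB(\mA_{+}-i\mB^*\mT_+\mB)^{-1}\mB^*+\Id = i\Id$. With a general $\mT$ in front, the right-hand side would not reduce to a multiple of the identity. The plan is to apply the operator to an arbitrary $\bp\in\mbH(\Sigma)'$ and compute the image explicitly. The only structural input is the blockwise identity \eqref{eq:12}, which gives $\mB\mA_+^{-1}\mB^* = \mT_+^{-1}$ on $\mbH(\Sigma)'$ because $\mA_+$, $\mB$ and $\mT_+$ are all block diagonal.

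\textbf{Well-posedness.} First I check that $\mA_+-i\mB^*\mT_+\mB:\mbH(\Omega)\to\mbH(\Omega)'$ is invertible. For $u\in\mbH(\Omega)$ we have $\Re\langle(\mA_+-i\mB^*\mT_+\mB)u,\overline{u}\rangle = \Vert u\Vert_{\mA_+}^2$, since $\langle \mT_+\mB u,\overline{\mB u}\rangle$ is real and so the $-i$ term is purely imaginary. The operator is therefore coercive, and Lax--Milgram gives invertibility.

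\textbf{Main computation.} Fix $\bp$ and set $u := (\mA_+-i\mB^*\mT_+\mB)^{-1}\mB^*\bp$. Then $\mA_+u = \mB^*(\bp + i\mT_+\mB u)$, so $u = \mA_+^{-1}\mB^*(\bp+i\mT_+\mB u)$. Applying $\mB$ and using \eqref{eq:12},
\begin{equation*}
  \mB u = \mT_+^{-1}(\bp + i\mT_+\mB u) = \mT_+^{-1}\bp + i\mB u .
\end{equation*}
Hence $(1-i)\mB u = \mT_+^{-1}\bp$, that is $\mB u = \tfrac{1+i}{2}\mT_+^{-1}\bp$. Substituting this back,
\begin{equation*}
  2i\mT_+\mB u + \bp = i(1+i)\bp + \bp = i\bp .
\end{equation*}
This is the claim.

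The only points requiring care are the coercivity argument and recognizing that \eqref{eq:12} applies subdomain by subdomain. I do not expect a genuine obstacle.
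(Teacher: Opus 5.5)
Your proof is correct and follows the paper's argument step for step. You set $\bu=(\mA_+-i\mB^*\mT_+\mB)^{-1}\mB^*\bp$, apply $\mB$ together with $\mB\mA_+^{-1}\mB^*=\mT_+^{-1}$ to get $(1-i)\mB\bu=\mT_+^{-1}\bp$, and conclude $2i\mT_+\mB\bu+\bp=i\bp$. Reading the prefactor as $\mT_+$ rather than $\mT$ is exactly what the paper's proof establishes, since its last line also computes $2i\mT_+\mB(\bu)+\bp$. Your coercivity check for the invertibility of $\mA_+-i\mB^*\mT_+\mB$ is a small addition that the paper leaves implicit.
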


\noindent 
\textbf{Proof:}

Take any $\bp\in\mbH(\Sigma)'$ and set $\bu :=
(\mA_{+}-i\mB^*\mT_+\mB)^{-1}\mB^*\bp$ so that $(\mA_{+}-i\mB^*\mT_+\mB)\bu
= \mB^*\bp \iff \mA_{+}\bu = \mB^*(\bp+i\mT_+\mB\bu) \Rightarrow \mB(\bu) =
\mB\mA_{+}^{-1}\mB^*(\bp+i\mT_+\mB\bu)$. Since $\mT_+^{-1} =
\mB\mA_{+}^{-1}\mB^*$ by the property \eqref{eq:12} satisfied by each
$\mT_{\Omega_j,+}$, we conclude that $\mB(\bu) = \mT_+^{-1}(\bp)+i\mB(\bu)$
and this finally leads to $2i\mT_+\mB(\bu) +\bp = \bp+2i\bp/(1-i) = i\bp$.
Coming back to the very definition of $\bu$ yields the desired
identity. \hfill $\Box$

\quad\\
From this we deduce that the scattering map is a compact perturbation of a
multiple of the identity.

\begin{lem}\label{sec:expl-deriv-param-1}\quad\\
  The operator $\mS-i\Id$ is compact as a map $\mbH(\Sigma)'\to\mbH(\Sigma)'$.
\end{lem}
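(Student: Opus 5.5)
The plan is to subtract the identity of Lemma \ref{sec:expl-deriv-param} from the definition \eqref{eq:11} of $\mS$, which gives
\begin{equation*}
  \mS - i\Id = 2i\big(\mT\mB\mathcal{L}^{-1}\mB^* - \mT_+\mB\mathcal{L}_+^{-1}\mB^*\big),
\end{equation*}
with the shorthand $\mathcal{L}:=\mA-i\mB^*\mT\mB$ and $\mathcal{L}_+:=\mA_+-i\mB^*\mT_+\mB$, both bounded $\mbH(\Omega)\to\mbH(\Omega)'$. The invertibility of $\mathcal{L}_+$ is implicit in Lemma \ref{sec:expl-deriv-param}; it is in fact coercive, since $\Re\langle\mathcal{L}_+\bu,\overline{\bu}\rangle=\Vert\bu\Vert_{\mA_+}^2$ and $\mT_+$ is self-adjoint positive. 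The invertibility of $\mathcal{L}$ is already used in \eqref{eq:11}. I would split the difference as
\begin{equation*}
  \mT\mB\mathcal{L}^{-1}\mB^* - \mT_+\mB\mathcal{L}_+^{-1}\mB^*
  = (\mT-\mT_+)\mB\mathcal{L}^{-1}\mB^* + \mT_+\mB(\mathcal{L}^{-1}-\mathcal{L}_+^{-1})\mB^*.
\end{equation*}
The first term is compact because $\mT-\mT_+$ is compact by assumption a) and the other factors are bounded. For the second term I would use the resolvent identity $\mathcal{L}^{-1}-\mathcal{L}_+^{-1} = \mathcal{L}^{-1}(\mathcal{L}_+-\mathcal{L})\mathcal{L}_+^{-1}$. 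This reduces the whole lemma to showing that
\begin{equation*}
  \mathcal{L}_+-\mathcal{L} = (\mA_+-\mA) + i\mB^*(\mT-\mT_+)\mB
\end{equation*}
is compact $\mbH(\Omega)\to\mbH(\Omega)'$, because compact operators form an ideal in the bounded ones.

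The term $i\mB^*(\mT-\mT_+)\mB$ is compact, again by assumption a) and the boundedness of $\mB$. The remaining piece is $\mA_+-\mA$, which is block diagonal with blocks given by
\begin{equation*}
  \langle(\mA_{\Omega_j,+}-\mA_{\Omega_j})u,v\rangle = 2\kappa^2\int_{\Omega_j}uv\,d\bx + i\kappa_*\int_{\partial\Omega\cap\partial\Omega_j}uv\,d\bs.
\end{equation*}
The first contribution factors through the embedding $\mH^1(\Omega_j)\hookrightarrow\mL^2(\Omega_j)$ and its adjoint. This embedding is compact by Rellich's theorem, since $\Omega_j$ is a bounded Lipschitz domain. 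The second contribution factors through the trace map $\mH^1(\Omega_j)\to\mL^2(\partial\Omega_j)$, followed by restriction to $\partial\Omega\cap\partial\Omega_j$. This trace map is compact on Lipschitz domains, as it is the composition of the bounded trace map into $\mH^{1/2}(\partial\Omega_j)$ with the compact embedding $\mH^{1/2}(\partial\Omega_j)\hookrightarrow\mL^2(\partial\Omega_j)$. An operator of the form $K^*MK$ with $K$ compact and $M$ bounded is compact, so each block is compact, and so is the finite block-diagonal operator $\mA_+-\mA$.

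Putting the pieces together, $\mS-i\Id$ is a finite sum of products of bounded operators, each containing at least one compact factor, so it is compact. The only genuinely analytic input, and the step I expect to need care, is the compactness of $\mA_+-\mA$: Rellich's theorem and the compactness of the boundary trace into $\mL^2$ on Lipschitz domains. I would cite both as standard rather than reprove them. Everything else is algebra, together with the ideal property of compact operators.
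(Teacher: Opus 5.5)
Your proof is correct and follows the paper's route: subtract the identity of Lemma~\ref{sec:expl-deriv-param}, use the resolvent identity to isolate $\mathcal{L}_+-\mathcal{L}=(\mA_+-\mA)-i\mB^*(\mT_+-\mT)\mB$, and conclude from the compactness of $\mT-\mT_+$ and of $\mA_+-\mA$. Your version is slightly more careful than the paper's on two points:
\begin{itemize}
\item Your splitting keeps an extra compact term in $\mT-\mT_+$. The paper's displayed identity for $\mS-i\Id$ drops the analogous term $2i(\mT-\mT_+)\mB\mathcal{L}_+^{-1}\mB^*$, because it effectively puts $\mT$ rather than $\mT_+$ in front of $\mB\mathcal{L}_+^{-1}\mB^*$, as in the misprinted statement of Lemma~\ref{sec:expl-deriv-param}.
\item You justify compactness of the boundary contribution $i\kappa_{*}\int_{\partial\Omega\cap\partial\Omega_j}uv\,d\bs$ through the compact trace $\mH^1(\Omega_j)\to\mL^2(\partial\Omega_j)$. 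The paper's appeal to the compact embedding into $\mL^2(\Omega_j)$ does not by itself cover this term.
\end{itemize}
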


\noindent 
\textbf{Proof:}
Starting from the Formula of Lemma \ref{sec:expl-deriv-param}, a direct
calculation shows that $\mS-i\Id =
2i\mT\mB(\mA-i\mB^*\mT\mB)^{-1}(\mA_{+}-\mA-
i\mB^*(\mT_+-\mT)\mB)(\mA_{+}-i\mB^*\mT_+\mB)^{-1}\mB^*$.  We already know,
by hypothesis, that $\mT_+-\mT$ is compact.  Next, comparing \eqref{eq:25}
with Expression \eqref{eq:8}, we see that
\begin{equation}\label{eq:16}
  \langle (\mA_{\Omega_j,+}-\mA_{\Omega_j})u,v\rangle = 2\kappa^2\int_{\Omega_j}u v\,d\bx
  + i\kappa_{*}\int_{\partial\Omega\cap \partial\Omega_j}uv d\bs
\end{equation}
hence, due to the compact embedding of $\mH^{1}(\Omega_j)$ into
$\mL^{2}(\Omega_j)$, we see that $\mA-\mA_{+}$ is compact as a map
$\mbH(\Omega)\to\mbH(\Omega)'$. This establishes the compactness
result. \hfill $\Box$

\begin{cor}\label{cor:effect-solut-strat}\quad\\
  The operator $(\Id+\Pi\mS)^{4}+4\Id$ is compact as a map $\mbH(\Sigma)'\to\mbH(\Sigma)'$.
\end{cor}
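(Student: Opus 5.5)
We write $\mS = i\Id + \mK$, where $\mK := \mS - i\Id$ is compact by Lemma \ref{sec:expl-deriv-param-1}. The idea is to expand $(\Id+\Pi\mS)^4$ and show that every term involving $\mK$ is compact, so that only the contribution of $(\Id + i\Pi)^4$ survives modulo compact operators. That contribution we then compute exactly using the involution property $\Pi^2=\Id$.

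First step: substitute $\mS = i\Id+\mK$ to get $\Id+\Pi\mS = (\Id + i\Pi) + \Pi\mK$. The operator $\Pi$ is bounded (indeed an isometry for $\Vert\cdot\Vert_{\mT^{-1}}$) and $\mK$ is compact, so $\Pi\mK$ is compact. Expanding $\big((\Id+i\Pi)+\Pi\mK\big)^4$ as a noncommutative product gives $(\Id+i\Pi)^4$ plus a finite sum of words in the bounded operators $\Id+i\Pi$ and $\Pi\mK$. Each of these words contains at least one factor $\Pi\mK$. Compact operators form a two-sided ideal in the bounded operators on $\mbH(\Sigma)'$, so every such word is compact, and so is their finite sum. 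Hence $(\Id+\Pi\mS)^4 - (\Id+i\Pi)^4$ is compact.

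Second step: compute $(\Id+i\Pi)^4$ exactly. Since $\Pi^2=\Id$, we have
\begin{equation*}
(\Id+i\Pi)^2 = \Id + 2i\Pi + i^2\Pi^2 = \Id + 2i\Pi - \Id = 2i\Pi .
\end{equation*}
Squaring again gives
\begin{equation*}
(\Id+i\Pi)^4 = (2i\Pi)^2 = -4\Pi^2 = -4\Id .
\end{equation*}
Therefore $(\Id+\Pi\mS)^4 + 4\Id = (\Id+\Pi\mS)^4 - (\Id+i\Pi)^4$, which is compact by the first step. This proves the claim.

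There is no real obstacle. The only points to handle carefully are that the expansion must respect non-commutativity, which the ideal property takes care of, and that $\Pi$ is bounded on $\mbH(\Sigma)'$, which follows from its isometry property.
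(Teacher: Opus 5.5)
Your proposal is correct and follows the paper's proof essentially step for step. You use compactness of $\mS-i\Id$ together with the two-sided ideal property to get compactness of $(\Id+\Pi\mS)^4-(\Id+i\Pi)^4$, and then $\Pi^2=\Id$ to compute $(\Id+i\Pi)^4=(2i\Pi)^2=-4\Id$; your explicit treatment of the noncommutative expansion only spells out a step the paper leaves implicit.
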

\noindent 
\textbf{Proof:}

Recall that compact operators form a two-sided ideal
\cite[Thm.4.18]{zbMATH01022519}.  According to the previous lemma
$\mS-i\Id$ is compact. As a consequence $(\Id + \Pi\mS) - (\Id + i\Pi)$ and
$(\Id + \Pi\mS)^{4} - (\Id + i\Pi)^{4}$ are also compact operators.
Exploiting the property $\Pi^2 = \Id$, we have $(\Id +i\Pi)^{4} =
(2i\Pi)^{2} = -4\Id$. We conclude that $(\Id + \Pi\mS)^{4} + 4\Id$ is
compact. \hfill $\Box$

\quad\\
The above result can be leveraged to prove superlinear-convergence of
GMRes. For a bounded linear operator $\mL:\mH\to\mV$ between two Hilbert
spaces $\mH,\mV$ equipped with the norms
$\Vert\cdot\Vert_{\mH},\Vert\cdot\Vert_{\mV}$, for any integer $j\geq 1$,
its singular values shall be defined by
\begin{equation}
  \sigma_j(\mL):=\inf\{\Vert
  \mL-\mL_j\Vert_{\mH\to \mV},\mL_j:\mH\to \mV,\;\;\mrm{rank}(\mL_j)<j\}
\end{equation}
A bounded linear operator $\mK:\mH\to \mV$ is compact if and only if its
singular values decrease to zero $\lim_{j\to \infty}\sigma_j(\mK) = 0$
\cite[Thm.2.6.5]{zbMATH00043989}. Now let us point to a theorem by Moret
\cite[Thm.1]{zbMATH01011186}.

\begin{thm}\label{thm:effect-solut-strat}\quad\\
  For any invertible bounded linear operator $\mL:\mH\to \mH$ in a Hilbert
  space $\mH$ equiped with norm $\Vert\cdot\Vert_{\mH}$, and for any
  $\bg\in \mH\setminus\{0\}, \lambda\in\mathbb{C}$, we have 
  \begin{equation}\label{eq:22}
      \min_{Q\in \mathbb{P}_n\lbrack X\rbrack, Q(0) = 1}\frac{\Vert
      Q(L)\bg\Vert_{\mH}}{\Vert\bg\Vert_{\mH}} \leq
      \prod_{j=1}^{n}\sigma_j(\mL-\lambda\Id)\sigma_j(\mL^{-1}).
  \end{equation}
\end{thm}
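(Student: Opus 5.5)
We prove Moret's bound by picking one explicit polynomial. It is built from a low-rank approximation of $\mL-\lambda\Id$ and then factorised so that the product structure of the right-hand side of \eqref{eq:22} appears.

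\textbf{Step 1 (a product form).} For each $j$ choose an operator $\mL_j$ with $\mrm{rank}(\mL_j)<j$ and $\Vert \mL-\lambda\Id-\mL_j\Vert$ close to $\sigma_j(\mL-\lambda\Id)$. The infimum is attained in Hilbert space, for instance by truncated Schmidt expansions when the operator is compact; in general we work up to an $\varepsilon$ and let $\varepsilon\to 0$ at the end. The plan is to construct recursively unit-free vectors $\bg_0=\bg$ and $\bg_k=(\Id-\mu_k^{-1}\mL)\bg_{k-1}$ with suitable scalars $\mu_k$. Then $Q(X)=\prod_k(1-X/\mu_k)$ satisfies $Q(0)=1$, and it suffices to show
\begin{equation*}
\Vert \bg_k\Vert\leq \sigma_k(\mL-\lambda\Id)\,\sigma_k(\mL^{-1})\,\Vert \bg_{k-1}\Vert .
\end{equation*}
This one-step estimate cannot hold for arbitrary vectors. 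It only holds because, after $k-1$ steps, $\bg_{k-1}$ lies in a subspace on which the rank-deficiency of $\mL_k$ can be exploited.

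\textbf{Step 2 (the standard trick).} Rather than a pure recursion, I would follow the usual proof for compact perturbations of the identity in the style of Moret and Nevanlinna, which uses the Krylov minimisation directly. Write $\mL=\lambda\Id+\mK$ and let $\mK=\sum_j s_j\,\bu_j\langle\cdot,\bv_j\rangle$ be an approximate Schmidt expansion, where $s_j\approx\sigma_j(\mL-\lambda\Id)$. The residual minimisation over $\mathscr K_n$ is equivalent to
\begin{equation*}
\Vert \br_n\Vert=\mrm{dist}\big(\bg,\ \mL\mathscr K_n(\bg)\big).
\end{equation*}
By Gram--Schmidt, this distance is the product of the successive ratios $\mrm{dist}(\mL^k\bg,\mL\mathscr K_{k})/\mrm{dist}(\mL^{k-1}\bg,\mL\mathscr K_{k-1})$. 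More precisely, with $\bw_k$ the orthonormalised Krylov basis, $\Vert\br_n\Vert/\Vert\bg\Vert=\prod_k |\sin\theta_k|$, where $\theta_k$ is the angle between $\bw_k$ and $\mL\mathscr K_k$. So it suffices to bound each factor.

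\textbf{Step 3 (bounding each factor).} Each factor is at most $\mrm{dist}(\mL\bw_k,\ \mrm{span}\{\mL\bw_1,\dots,\mL\bw_{k-1}\})/\Vert\mL\bw_k\Vert$ times a similar ratio. Then I would use the following bounds:
\begin{equation*}
\mrm{dist}\big(\mL\bw_k,\mrm{span}\{\mL\bw_i\}_{i<k}\big)\le\mrm{dist}\big((\mL-\lambda\Id)\bw_k,\mrm{span}\{(\mL-\lambda\Id)\bw_i\}_{i<k}\big),
\end{equation*}
which holds because the $\lambda\bw_i$ lie in the Krylov space, and $\Vert\mL\bw_k\Vert^{-1}\leq$ something controlled by $\mL^{-1}$. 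Taking the product over $k$, the orthonormal family $\{\bw_k\}$ gives
\begin{equation*}
\prod_{k\le n}\mrm{dist}\big(\mK\bw_k,\mrm{span}\{\mK\bw_i\}_{i<k}\big)=\big|\det\big(\langle \mK\bw_i,\mK\bw_j\rangle\big)\big|^{1/2}\le\prod_{k\le n}\sigma_k(\mK),
\end{equation*}
by the Weyl/Ky Fan determinant inequality. Similarly, $\prod_k\Vert\mL\bw_k\Vert^{-1}$ type terms, again handled via a Gram determinant of $\mL$ restricted to $\mrm{span}\{\bw_i\}$ compared with $\mL^{-1}$, give $\prod_k\sigma_k(\mL^{-1})$.

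\textbf{Main obstacle.} The main difficulty is arranging the factorisation so that both Gram determinants appear with the correct orthonormal families. For $\mL^{-1}$ the relevant family is $\{\mL\bw_k\}$ orthonormalised, not $\{\bw_k\}$. To handle this, I would first express $\Vert\br_n\Vert$ as $\det\mathrm{Gram}(\bg,\mL\bg,\dots,\mL^n\bg)^{1/2}/\det\mathrm{Gram}(\mL\bg,\dots,\mL^n\bg)^{1/2}$. I would then insert the $\lambda$-shift into the numerator by column operations, and bound the numerator by $\prod\sigma_k(\mK)$ times $\det\mathrm{Gram}(\bw_1,\dots,\bw_n)^{1/2}$. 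For the denominator I would use $\det\mathrm{Gram}(\mL\bw_i)\ge\prod_k\sigma_k(\mL^{-1})^{-2}\det\mathrm{Gram}(\bw_i)$, which follows from the Courant--Fischer characterisation of singular values of $\mL^{-1}$.
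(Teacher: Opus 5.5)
The paper does not prove this statement. It quotes it as Moret's Theorem~1, so your argument has to stand on its own. It contains one route that fails and one that works.

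\textbf{Drop Steps 2--3.}
\begin{itemize}
\item The telescoping product in Step~2 is degenerate. Every numerator $\mrm{dist}(\mL^k\bg,\mL\mathscr K_k(\bg))$ vanishes, because $\mL^k\bg\in\mL\mathscr K_k(\bg)$.
\item The correct one-step relation is $\Vert\br_k\Vert=\sin\angle\big(\br_{k-1},\mL\mathscr K_k(\bg)\big)\,\Vert\br_{k-1}\Vert$. It involves $\br_{k-1}$, not the $k$-th orthonormal Krylov vector.
\item More seriously, the Step~3 inequality $\mrm{dist}(\mL\mathbf w_k,\mrm{span}\{\mL\mathbf w_i\}_{i<k})\le\mrm{dist}((\mL-\lambda\Id)\mathbf w_k,\mrm{span}\{(\mL-\lambda\Id)\mathbf w_i\}_{i<k})$ is false. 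For $k=1$, $\mL=\Id$, $\lambda=1$ it reads $1\le 0$. The two spans are different subspaces, so ``$\lambda\mathbf w_i$ lies in the Krylov space'' does not justify it.
\end{itemize}
The shift can only be removed at the level of the Gram determinant of the whole family $\bg,\mL\bg,\dots,\mL^n\bg$. That is exactly what your last paragraph does.

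\textbf{Your last paragraph is a correct proof} once the bookkeeping is made consistent. Set $\mK=\mL-\lambda\Id$ and $V=[\bg,\mK\bg,\dots,\mK^{n-1}\bg]$, not orthonormalised. Write $G(\cdot)$ for Gram matrices and $\Vert\br_n\Vert=\mrm{dist}(\bg,\mL\mathscr K_n(\bg))$.

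\emph{Degenerate case.} If $\det G(V)=0$, then some $\mathscr K_m(\bg)$ with $m<n$ is $\mL$-invariant. Since $\mL$ is injective, $\bg\in\mL\mathscr K_m(\bg)$, so the left-hand side is $0$. This case must be set aside, because your ratio is $0/0$ there.

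\emph{Main case.} Otherwise, unit upper-triangular column changes give
\begin{equation*}
  \Vert\br_n\Vert^2=\frac{\det G(\bg,\mK V)}{\det G(\mL V)},\qquad
  \det G(\bg,\mK V)\le\Vert\bg\Vert^2\det G(\mK V).
\end{equation*}
The second relation follows from $\det G(\bg,\mK V)=\mrm{dist}(\bg,\mathrm{Ran}\,\mK V)^2\det G(\mK V)$. The factor $\Vert\bg\Vert^2$ is missing from your sketch.

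Write $V=QR$ with $Q:\CC^n\to\mH$ isometric. Then:
\begin{itemize}
\item $\det G(\mK V)=\det G(V)\prod_{j\le n}\sigma_j(\mK Q)^2\le\det G(V)\prod_{j\le n}\sigma_j(\mK)^2$. Only $\sigma_j(\mK Q)\le\sigma_j(\mK)$ is used, so the approximate Schmidt expansions and the $\varepsilon$-argument are unnecessary.
\item $\det G(\mL V)=\det G(V)\prod_{j\le n}\sigma_j(\mL Q)^2\ge\det G(V)\prod_{j\le n}\sigma_j(\mL^{-1})^{-2}$. This is your Courant--Fischer step. Equivalently, use $(\mL Q)^{+}=Q^*\mL^{-1}P$, with $P$ the orthogonal projector onto $\mathrm{Ran}\,\mL Q$.
\end{itemize}
The factor $\det G(V)$ cancels and the claimed bound follows. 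It matters that the same non-orthonormalised family $V$ appears in numerator and denominator. With orthonormalised vectors on one side only, the two determinants would not match, which is what your sketch leaves ambiguous.

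In Arnoldi coordinates the same argument reads
\begin{equation*}
  \Vert\br_n\Vert/\Vert\bg\Vert=\prod_{j\le n}h_{j+1,j}\Big/\det(\bar H_n^*\bar H_n)^{1/2},
\end{equation*}
with $\prod_j h_{j+1,j}\le\prod_j\sigma_j(\mK)$ and $\det(\bar H_n^*\bar H_n)^{1/2}=\prod_j\sigma_j(\mL Q)$.
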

\noindent 
Similar results can be found in \cite[Chap.5]{zbMATH00556557}. With the
previous theorem, if $\mL-\lambda\Id$ is assumed compact, then we have
$\lim_{j\to \infty}\sigma_j(\mL-\lambda\Id) = 0$, so the left hand side of
the inequality decays faster than $\delta^{n}$, for any $\delta\in
(0,1)$. This is what we call "superlinear convergence".

\quad\\
Taking $\mL = (\Id+\Pi\mS)^4$ and $\lambda = -4$ leads to a situation
fitting Theorem \ref{thm:effect-solut-strat} according to Corollary
\ref{cor:effect-solut-strat}. We can apply \eqref{eq:22} and, for all
$j\geq 0$, we make the crude estimate $\sigma_j(\mL^{-1})\leq \Vert (\Id +
\Pi\mS)^{-4}\Vert_{\mbH(\Sigma)'\to \mbH(\Sigma)'}\leq \gamma_\star^{-4}$, which leads to
\begin{equation*}
  \begin{aligned}
    \frac{\Vert \br_{4n}\Vert_{\mT^{-1}}}{\Vert \br_0\Vert_{\mT^{-1}}}
    & = \min_{Q\in \mathbb{P}_{4n}\lbrack X\rbrack, Q(0) = 1}
    \frac{\Vert Q(\Id+\Pi\mS)\br_0\Vert_{\mT^{-1}}}{\Vert \br_0\Vert_{\mT^{-1}}}\\
    & \leq  \prod_{1\leq j \leq n}
    \Big(\frac{\sigma_j(4\Id +
    (\Id + \Pi\mS)^4)}{\gamma_\star^{4}}\Big).
  \end{aligned}
\end{equation*}
We deduce a convergence bound for the whole sequence $\{\br_n\}_{n\geq 0}$
by simple euclidean division. For any $n\in\mathbb{N}$, let
$\floor{n/4}\in\mathbb{N}$ refer to the unique integer satisfying
$\floor{n/4}\leq n/4 \leq \floor{n/4}+1$. By the very definition of the
GMRes residual as a minimizer, $\Vert \br_{n}\Vert_{\mT^{-1}}\leq \Vert
\br_{4\floor{n/4}}\Vert_{\mT^{-1}}$ hence,
\begin{equation}\label{eq:27}
  \frac{\Vert \br_{n}\Vert_{\mT^{-1}}}{\Vert \br_0\Vert_{\mT^{-1}}} \leq  \prod_{1\leq j \leq \floor{n/4}}
  \Big(\frac{\sigma_j(4\Id +
    (\Id + \Pi\mS)^4)}{\gamma_\star^{4}}\Big).
\end{equation}
Due to the compactness of $4\Id + (\Id + \Pi\mS)^4$ we have $\lim_{j\to
  \infty}\sigma_j(4\Id + (\Id + \Pi\mS)^4)/\gamma_\star^{4} = 0$, which
yields superlinear convergence of GMRes applied to \eqref{eq:9}. As a
consequence, asymptotically for $n\to \infty$, the bound \eqref{eq:27} is
much sharper than Elman's estimate \eqref{eq:28}.

\subsection{Singular value decay}
In practice, based on the boundary value problem \eqref{eq:3} under
consideration, we may have sharper information about the compactness of
$\mS-i\Id$, and this can be exploited to quantify even more precisely the
convergence of GMRes.  For any $p\in\lbr 1,+\infty)$ and two
Hilbert spaces $\mH,\mV$, the Schatten $p$-class is defined by
\begin{equation}
  \mathcal{C}_p(\mH,\mV):=\{\mK:\mH\to \mV\;\text{compact},\; \Vert
  \mK\Vert_{\mathcal{C}_p}:= \big(\sum_{j=0}^{+\infty}\sigma_j(\mK)^{p}\big)^{1/p}<+\infty\}
\end{equation}
see \cite[Chap.11]{zbMATH00043989}, \cite[Chap.2]{zbMATH03354068} for
further details on this class of compact operators. The norm $\Vert
\cdot\Vert_{\mathcal{C}_p}$ quantifies the rate of decay of the singular
values $\sigma_j(\mK)$. Following a remark from
\cite{zbMATH03700119,zbMATH01011186}, this can be used to further estimate
the right hand side in \eqref{eq:22} by means of the following lemma.

\begin{lem}\label{sec:superl-conv-gmres-1}\quad\\
  For $\mK\in \mathcal{C}_p(\mH,\mV), p\in \lbr 1,+\infty)$, we have
  $\prod_{j=1}^{n}\sigma_j(\mK)\leq (n^{-1/p}\Vert \mK\Vert_{\mathcal{C}_p})^{n}$
\end{lem}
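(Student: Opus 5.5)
The plan is to combine the AM-GM inequality with the monotonicity of the singular values. By definition, the sequence $\sigma_j(\mK)$ is non-increasing in $j$: a rank $<j$ approximant also has rank $<j+1$. The same definition gives $\sigma_j(\mK)\geq 0$.

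\textbf{Step 1 (AM-GM).} Applying the arithmetic-geometric mean inequality to the $n$ non-negative numbers $\sigma_j(\mK)^p$, $j=1,\dots,n$, gives
\begin{equation*}
  \Big(\prod_{j=1}^{n}\sigma_j(\mK)\Big)^{p/n}
  = \Big(\prod_{j=1}^{n}\sigma_j(\mK)^{p}\Big)^{1/n}
  \leq \frac{1}{n}\sum_{j=1}^{n}\sigma_j(\mK)^{p}.
\end{equation*}

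\textbf{Step 2 (comparison with the Schatten norm).} The finite sum on the right is a partial sum of the series defining $\Vert \mK\Vert_{\mathcal{C}_p}^{p}$, and all terms are non-negative. Hence $\sum_{j=1}^{n}\sigma_j(\mK)^p\leq \Vert\mK\Vert_{\mathcal{C}_p}^{p}$. Here I note that the paper indexes the series from $j=0$ while the product starts at $j=1$; either way the partial sum is dominated by the full series.

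\textbf{Step 3 (conclusion).} Combining Steps 1 and 2 gives $\big(\prod_{j=1}^{n}\sigma_j(\mK)\big)^{p/n}\leq n^{-1}\Vert\mK\Vert_{\mathcal{C}_p}^{p}$. Raising both sides to the power $n/p>0$ preserves the inequality and yields
\begin{equation*}
  \prod_{j=1}^{n}\sigma_j(\mK)\leq \big(n^{-1/p}\Vert\mK\Vert_{\mathcal{C}_p}\big)^{n}.
\end{equation*}

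There is no genuine obstacle here. Monotonicity is not even needed, since only non-negativity of the $\sigma_j$ is used. The only point requiring a little care is the indexing convention noted in Step 2, which does not affect the inequality.
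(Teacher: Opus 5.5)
Your proof is correct and is essentially the paper's AM--GM argument. The only difference is that you apply AM--GM to the $p$-th powers $\sigma_j(\mK)^p$, which yields the factor $n^{-1/p}$ in one step, whereas the paper applies AM--GM to the $\sigma_j(\mK)$ themselves and then uses H\"older's inequality to pass from the arithmetic mean to the $\ell^p$ mean.
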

\noindent \textbf{Proof:}

Using the inequality between arithmetic and geometric means,
e.g. \cite[Chap.3]{zbMATH01022658}, and H\"older inequality, we have
%\begin{equation}
$\prod_{j=1}^{n}\sigma_j(\mK)^{1/n}\leq
\sum_{j=1}^{n}\sigma_j(\mK)/n\leq
n^{-1/p}(\sum_{j=1}^{n}\sigma_j(\mK)^p)^{1/p} \leq  n^{-1/p} \Vert \mK\Vert_{\mathcal{C}_p}$.
%\end{equation}
\hfill $\Box$

\quad\\
The next lemma gives sufficient condition for the operator $(\Id +
\Pi\mS)^4+4\Id$ to belong to the Schatten $p$-class with a quantitative
norm estimate.

\begin{lem}\label{sec:superl-conv-gmres}\quad\\
  Assume that $\mA-\mA_+:\mbH(\Omega)\to \mbH^1(\Omega)'$ and
  $\mT-\mT_+:\mbH(\Sigma)\to \mbH(\Sigma)'$ belong to the Schatten $p$-class
  for some $p\geq 1$. Then $\mS-i\Id$ and $(\Id+\Pi\mS)^{4}+4\Id$ belong to the Schatten
  $p$-class with
  \begin{equation}
    \begin{aligned}
      & \Vert \mS-i\Id\Vert_{\mathcal{C}_p}\leq \sqrt{2}\mathcal{E}\\
      & \Vert 4\Id+(\Id+\Pi\mS)^{4}\Vert_{\mathcal{C}_p}\leq
      C(\mathcal{E})\mathcal{E}\\[10pt]
      & \text{where}\quad \mathcal{E} = (\Vert\mA-\mA_+\Vert_{\mathcal{C}_p} +
      \Vert \mT-\mT_+\Vert_{\mathcal{C}_p})/(\sqrt{2}\beta)\\
      & \textcolor{white}{where}\quad 1/\beta = \Vert(\mA-i\mB^*\mT\mB)^{-1}\Vert_{\mbH(\Omega)'\to\mbH(\Omega)}\\
      & \textcolor{white}{where}\quad C(\mathcal{E}) =
      4(4+6\mathcal{E} + 4\mathcal{E}^2 + \mathcal{E}^3)
    \end{aligned}
  \end{equation}
\end{lem}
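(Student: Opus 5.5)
The plan is to work from the factorization obtained in the proof of Lemma \ref{sec:expl-deriv-param-1},
\begin{equation*}
  \mS-i\Id = 2i\,\mT\mB(\mA-i\mB^*\mT\mB)^{-1}\big(\mA_{+}-\mA- i\mB^*(\mT_+-\mT)\mB\big)(\mA_{+}-i\mB^*\mT_+\mB)^{-1}\mB^*,
\end{equation*}
and from the two-sided ideal property of the Schatten class, $\Vert \mX\mK\mY\Vert_{\mathcal{C}_p}\leq \Vert \mX\Vert\,\Vert\mK\Vert_{\mathcal{C}_p}\Vert \mY\Vert$.

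\textbf{Middle factor.} Since $\Vert\mB\Vert=\Vert\mB^*\Vert=1$, the middle factor has Schatten norm at most $\Vert\mA-\mA_+\Vert_{\mathcal{C}_p}+\Vert\mT-\mT_+\Vert_{\mathcal{C}_p}$.

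\textbf{Left factor.} By definition of $\beta$, $\Vert(\mA-i\mB^*\mT\mB)^{-1}\Vert = 1/\beta$. The map $\mT\mB$ is bounded by $1$, because $\mT:\mbH(\Sigma)\to\mbH(\Sigma)'$ is an isometry from the $\mT$-norm to the $\mT^{-1}$-norm. This step relies on identifying the $\mT$ and $\mT_+$ trace norms in the bound $\Vert\mB\Vert\leq 1$. That identification is the delicate point; if it fails, the equivalence constants $\lambda_{\min},\lambda_{\max}$ enter, and I would check how the paper's normalization absorbs them.

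\textbf{Right factor.} The key computation is that $\Vert(\mA_{+}-i\mB^*\mT_+\mB)^{-1}\mB^*\Vert = 1/\sqrt{2}$. Set $\bu=(\mA_{+}-i\mB^*\mT_+\mB)^{-1}\mB^*\bp$. The proof of Lemma \ref{sec:expl-deriv-param} gives $\mB\bu = \mT_+^{-1}\bp/(1-i)$. Testing the equation against $\overline{\bu}$ and taking the real part yields
\begin{equation*}
  \Vert\bu\Vert_{\mA_+}^2=\Re\langle \bp,\mB\overline{\bu}\rangle = \Re\big\{\langle\bp,\mT_+^{-1}\overline{\bp}\rangle/(1+i)\big\} = \Vert\bp\Vert_{\mT_+^{-1}}^2/2.
\end{equation*}
Collecting the factors $2\cdot(1/\beta)\cdot(1/\sqrt2)$ gives $\Vert\mS-i\Id\Vert_{\mathcal{C}_p}\leq \sqrt2(\Vert\mA-\mA_+\Vert_{\mathcal{C}_p}+\Vert\mT-\mT_+\Vert_{\mathcal{C}_p})/\beta$. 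Dividing and multiplying by $\sqrt2$ shows this equals $2\mathcal{E}$ rather than $\sqrt2\mathcal{E}$. I would therefore recheck whether the left factor $\mT\mB$ actually contributes a factor $1/\sqrt2$ by the same real/imaginary splitting applied to $\mA-i\mB^*\mT\mB$. This constant-chasing is the step I expect to be the main obstacle.

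\textbf{Fourth power.} Write $\mK:=\mS-i\Id$, $\mM:=\Id+i\Pi$ and $\mN:=\Pi\mK$, so that $\Id+\Pi\mS=\mM+\mN$ and $\mM^4=-4\Id$ as in Corollary \ref{cor:effect-solut-strat}. Since $\Pi$ is a $\mT^{-1}$-isometric involution, it is self-adjoint for the $\mT^{-1}$ scalar product. Hence $\Vert(\Id+i\Pi)\bp\Vert^2=2\Vert\bp\Vert^2$, and $\Vert\mM\Vert=\sqrt2$. Also $\Vert\mN\Vert\leq\Vert\mN\Vert_{\mathcal{C}_p}\leq\sqrt2\mathcal{E}$.

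Expand $(\mM+\mN)^4-\mM^4$ as a sum of words containing $k\geq1$ letters $\mN$. Bound each word by the ideal property, putting one factor $\mN$ in the $\mathcal{C}_p$ norm and all other factors in operator norm. Counting $\binom4k$ words with $k$ letters $\mN$ gives
\begin{equation*}
  \sum_{k=1}^4\binom4k(\sqrt2)^{4-k}(\sqrt2\mathcal{E})^k = 16\mathcal{E}+24\mathcal{E}^2+16\mathcal{E}^3+4\mathcal{E}^4 = C(\mathcal{E})\mathcal{E},
\end{equation*}
which is exactly the announced constant.
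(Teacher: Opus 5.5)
Your route is the paper's route: the factorization from Lemma~\ref{sec:expl-deriv-param-1}, the ideal property, then the fourth power. As you suspected, though, it does not deliver the stated constant, so the proposal does not prove the lemma as written.

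Set $\delta:=\Vert\mA-\mA_+\Vert_{\mathcal{C}_p}+\Vert\mT-\mT_+\Vert_{\mathcal{C}_p}$, so that $\mathcal{E}=\delta/(\sqrt2\beta)$. Your factor-by-factor estimate (right factor exactly $1/\sqrt2$, left factor at most $1/\beta$) gives $\Vert\mS-i\Id\Vert_{\mathcal{C}_p}\leq\sqrt2\delta/\beta=2\mathcal{E}$. Your fourth-power step then relies on $\Vert\Pi(\mS-i\Id)\Vert_{\mathcal{C}_p}\leq\sqrt2\mathcal{E}$, which you have not established. Fed with $2\mathcal{E}$, it only gives $C(\sqrt2\mathcal{E})\sqrt2\mathcal{E}$.

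The repair you suggest cannot work. Testing $(\mA-i\mB^*\mT\mB)\bv=\bg$ against $\overline{\bv}$ and using $\Im\langle\mA\bv,\overline{\bv}\rangle\leq0$ yields only $\Vert\mB\bv\Vert_{\mT}^2\leq-\Im\langle\bg,\overline{\bv}\rangle\leq\Vert\bg\Vert^2/\beta$. That is a factor $\beta^{-1/2}$, not $(\sqrt2\beta)^{-1}$.

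More decisively, take the scalar model $\mbH(\Omega)=\mbH(\Sigma)=\CC$, $\mA_+=\mB=\mT=\mT_+=1$, $\mA=a$ with $\Im a\leq0$, so that $\beta=\vert a-i\vert$. Then $\mS=(a+i)/(a-i)$ and $\mS-i=(1-i)(a-1)/(a-i)$. Hence $\vert\mS-i\vert=\sqrt2\vert a-1\vert/\vert a-i\vert=2\mathcal{E}$ exactly. For $a=-1$ this gives $\vert\mS-i\vert=2$ with $\mathcal{E}=1$, where the lemma would claim $\sqrt2$. So no argument that only uses this factorization and the norms of its factors can reach $\sqrt2\mathcal{E}$.

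The paper's proof does not supply the missing ingredient either. It uses the cruder bound $\Vert(\mA_+-i\mB^*\mT_+\mB)^{-1}\Vert\leq1$; your exact value $1/\sqrt2$ for the composite with $\mB^*$ is sharper. It then asserts $\Vert\mS-i\Id\Vert_{\mathcal{C}_p}\leq\delta/\beta$, whereas the product of its own bounds, counting the prefactor $2$ in $2i\mT\mB$, is $2\delta/\beta$.

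Your constant-chasing is therefore more careful than the paper's. What your argument actually proves is the lemma with $\sqrt2\mathcal{E}$ replaced by $2\mathcal{E}$ and $C(\mathcal{E})\mathcal{E}$ replaced by $C(\sqrt2\mathcal{E})\sqrt2\mathcal{E}$. The identification of the $\mT$- and $\mT_+$-norms that worries you is made silently in the paper as well. It costs factors of $\lambda_{\max}$ when $\mT\neq\mT_+$, but it is not the source of the $\sqrt2$, since the scalar model has $\mT=\mT_+$.

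Finally, your expansion of $((\Id+i\Pi)+\Pi(\mS-i\Id))^4-(\Id+i\Pi)^4$ into non-commuting words is sounder than the paper's step $(\Id+\Pi\mS)^4=-4(\Id+X)^4$ with $X=(\Pi-i\Id)(\mS-i\Id)/2$. Since $\Id+\Pi\mS=(\Id+i\Pi)(\Id+X)$, that identity would need $\Id+i\Pi$ to commute with $X$. Your word count yields the same bound $4((1+x)^4-1)$, with $x=\Vert\mS-i\Id\Vert_{\mathcal{C}_p}/\sqrt2$, without any commutation.
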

\noindent
\textbf{Proof:}

The operartor $\mA-\mA_+-i\mB^*(\mT-\mT_+)\mB$ belongs to the schatten
$p$-class with $\Vert
\mA-\mA_+-i\mB^*(\mT-\mT_+)\mB\Vert_{\mathcal{C}_p}\leq \Vert
\mA-\mA_+\Vert_{\mathcal{C}_p} + \Vert \mT-\mT_+\Vert_{\mathcal{C}_p}$
since $\Vert \mB\Vert_{\mbH(\Omega)\to \mbH(\Sigma)} = 1$. A direct
computation exploiting the positivity of $\mA_+,\mT_+$ reveals that
$\Vert(\mA_{+}-i\mB^*\mT_+\mB)^{-1}\Vert_{\mbH(\Omega)'\to
  \mbH(\Omega)}\leq 1$. From the explicit expression of $\mS-i\Id$ derived
in the proof of Lemma \ref{sec:expl-deriv-param-1}, we conclude that
$\mS-i\Id$ also belongs to the Schatten $p$-class with
\begin{equation}
  \begin{aligned}
    & \Vert \mS-i\Id\Vert_{\mathcal{C}_p}\leq (\Vert
    \mA-\mA_+\Vert_{\mathcal{C}_p} + \Vert
    \mT-\mT_+\Vert_{\mathcal{C}_p})/\beta\\
    & \text{with}\;\;1/\beta :=
    \Vert(\mA-i\mB^*\mT\mB)^{-1}\Vert_{\mbH(\Omega)'\to\mbH(\Omega)}
  \end{aligned}
\end{equation}
The coefficient $\beta$ is the inf-sup constant of local subproblems
modeled by the operator $\mA-i\mB^*\mT\mB$. Recall $\Pi^2 = \Id$, which
yields $(\Id + i\Pi)^{-1} =(\Id-i\Pi)/2$ and $(\Id + i\Pi)^4 =
-4\Id$. Hence we can write $(\Id + \Pi\mS)^{4} = ((\Id + i\Pi) +
\Pi(\mS-i\Id))^{4} = -4 (\Id + (\Pi-i\Id)(\mS-i\Id)/2)^{4}$ which finally
leads to 
\begin{equation}
  \begin{aligned}
    & (\Id + \Pi\mS)^{4} + 4\Id = - C(X) X\\
    & \text{with}\quad X = (\Pi-i\Id)(\mS-i\Id)/2. 
  \end{aligned}
\end{equation}
Since $\Pi$ is self-adjoint with respect to $\mT^{-1}$, and $\Pi^2 = \Id$, we see
that $(\Pi-i\Id)/\sqrt{2}$ is unitary with respect to $\Vert
\cdot\Vert_{\mT^{-1}}$. As a consequence, with the above notations, we have
$\Vert X\Vert_{\mathcal{C}_p}\leq \Vert
\mS-i\Id\Vert_{\mathcal{C}_p}/\sqrt{2}$ which concludes the proof.  \hfill $\Box$

\quad\\
Combining \eqref{eq:27}, and Lemma \ref{sec:superl-conv-gmres-1} and
\ref{sec:superl-conv-gmres}, yields the following convergence result when
applying GMRes to solve \eqref{eq:9}. 

\begin{cor}\quad\\
  Assume that $\mA-\mA_+:\mbH(\Omega)\to \mbH^1(\Omega)'$ and
  $\mT-\mT_+:\mbH(\Sigma)\to \mbH(\Sigma)'$ belong to the Schatten
  $p$-class for some $p\geq 1$. If $\bp_n\in \mbH(\Sigma)'$ refers to the
  $n$-th iterate obtained by applying GMRes to \eqref{eq:9}, setting
  $\br_n = \bff - (\Id +\Pi\mS)\bp_n$ we have 
  \begin{equation}\label{eq:29}
    \frac{\Vert \br_n\Vert_{\mT^{-1}}}{\Vert \br_0\Vert_{\mT^{-1}}}\leq
    \Big(\frac{\Vert 4\Id + (\Id+\Pi\mS)^{4}\Vert_{\mathcal{C}_p}}{\gamma_\star^{4}\,\floor{n/4}^{1/p}}\Big)^{\floor{n/4}}
  \end{equation}
\end{cor}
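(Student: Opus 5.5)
The plan is to chain three earlier results: the bound \eqref{eq:27}, Lemma \ref{sec:superl-conv-gmres}, and Lemma \ref{sec:superl-conv-gmres-1}. Set $m:=\floor{n/4}$ and $\mK := 4\Id + (\Id+\Pi\mS)^{4}$. Under the stated Schatten hypotheses, Lemma \ref{sec:superl-conv-gmres} guarantees that $\mK\in\mathcal{C}_p(\mbH(\Sigma)',\mbH(\Sigma)')$. In particular $\mK$ is compact, so its singular values are well defined and summable to the power $p$.

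The first step is to start from \eqref{eq:27}. It was derived from Moret's Theorem \ref{thm:effect-solut-strat} applied with $\mL=(\Id+\Pi\mS)^4$ and $\lambda=-4$, using the crude bound $\sigma_j(\mL^{-1})\leq\gamma_\star^{-4}$ and the monotonicity of GMRes residuals. It gives
\begin{equation*}
  \frac{\Vert \br_{n}\Vert_{\mT^{-1}}}{\Vert \br_0\Vert_{\mT^{-1}}}\leq \gamma_\star^{-4m}\prod_{j=1}^{m}\sigma_j(\mK).
\end{equation*}
The second step applies Lemma \ref{sec:superl-conv-gmres-1} to $\mK$ with $n$ replaced by $m$. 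This yields $\prod_{j=1}^{m}\sigma_j(\mK)\leq (m^{-1/p}\Vert\mK\Vert_{\mathcal{C}_p})^{m}$. Combining the two displays gives exactly $\big(\Vert\mK\Vert_{\mathcal{C}_p}/(\gamma_\star^{4}m^{1/p})\big)^{m}$, which is \eqref{eq:29}.

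The only point needing care is the degenerate case $m=0$, that is $n<4$. There the product is empty and the right-hand side of \eqref{eq:29} should be read as $1$. This bound holds trivially, because GMRes residuals are non-increasing and $\br_n$ minimizes over a set containing $\br_0$. I would state this convention explicitly and not divide by $m^{1/p}=0$.

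The main obstacle is a consistency check on indexing. The Schatten norm is written as a sum starting at $j=0$, whereas singular values are defined for $j\geq 1$. This causes no difficulty: one only needs $\sum_{j=1}^{m}\sigma_j(\mK)^p\leq\Vert\mK\Vert_{\mathcal{C}_p}^p$, which holds under either convention, and this is exactly what the Hölder step of Lemma \ref{sec:superl-conv-gmres-1} uses.
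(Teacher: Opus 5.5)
Your proposal is correct and matches the paper's own justification, which simply combines \eqref{eq:27} with Lemma~\ref{sec:superl-conv-gmres} (to place $4\Id+(\Id+\Pi\mS)^4$ in $\mathcal{C}_p$) and Lemma~\ref{sec:superl-conv-gmres-1} applied with $\lfloor n/4\rfloor$ in place of $n$. Your treatment of the case $\lfloor n/4\rfloor=0$ and of the $j=0$ versus $j\geq 1$ indexing covers points the paper leaves implicit; it does not change the argument.
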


\quad\\
Let us underline that, if $\mT=\mT_+$, then the explicit expression
\eqref{eq:24} can be used to show that $\mA-\mA_+$ belongs to a Schatten
$p$-class for some $p\geq 1$, making use of Weyl's law for the Neumann
laplacian in domains with Lipschitz boundary (see
e.g. \cite[Corollary 1.6]{zbMATH02196648}), and Weyl asymptotics of
Poincaré-Steklov eigenvalues (see e.g. \cite[Theorem 2.1]{zbMATH07797566}).

\section{Preconditioning strategy}

Estimates \eqref{eq:28}, \eqref{eq:27}, \eqref{eq:29} indicate that such a
standard Krylov solver as GMRes is systematically convergent with a
favorable asymptotic behaviour for a growing dimension of Krylov
spaces. Unfortunately, this is not a garantee of effective computational
performance. Indeed, all these estimates dramatically depend on the inf-sup
constant $\gamma_\star = \Vert (\Id+\Pi\mS)^{-1}\Vert_{\mbH(\Sigma)'\to
  \mbH(\Sigma)'}^{-1}$ from \eqref{eq:26}, and they deteriorate
as $\gamma_\star$ gets close to zero, which occurs for example in the high
frequency regime $\kappa\to \infty$, or whenever the geometrical
configuration involves trapping features. It is thus desirable to devise a
preconditioning strategy that stabilizes the convergence of linear solvers,
no matter the frequency regime or the geometrical configuration. The main
goal of the present contribution is to discuss an efficient solution
strategy for \eqref{eq:9} that achieves such a robustness.

\subsection{First order preconditioner}\label{sec:first-order-prec}
As we have seen, the operator $\Id+\Pi\mS$ differs from  $\Id+i\Pi$ modulo
a compact perturbation. As a consequence $(\Id+i\Pi)^{-1} = (\Id-i\Pi)/2$,
appears as a natural preconditioner. Left multiplying $\Id+\Pi\mS$ by this
operator, and taking account of the identity $\Pi^2 = \Id$, we obtain 
\begin{equation}\label{eq:30}
  \begin{aligned}
    \frac{1}{2}(\Id-i\Pi)(\Id+\Pi\mS)
    & = \frac{1}{2}(\Id-i\Pi)\big( (\Id+i\Pi)+\Pi(\mS-i\Id)\big)\\
    & = \Id + \Big(\frac{\Pi-i\Id}{\sqrt{2}}\Big)\Big(\frac{\mS-i\Id}{\sqrt{2}}\Big)
  \end{aligned}
\end{equation}
Since $\mS-i\Id$ is compact, the operator above appears to be a compact
perturbation of the identity. The corresponding preconditioned variant of
Equation \eqref{eq:9} then writes
\begin{empheq}[box={\setlength{\fboxsep}{10pt}\colorbox{grisclair}}]{equation}\label{eq:31}
  \begin{aligned}
    & \text{Find}\;\bp\in \mbH(\Sigma)'\;\text{such that}\\
    & ( \Id + (\Pi-i\Id)(\mS-i\Id)/2)\bp = (\Id-i\Pi)\bff/2.
  \end{aligned}
\end{empheq}
%% \begin{equation}\label{eq:31}
%%   \begin{aligned}
%%     & \text{Find}\;\bp\in \mbH(\Sigma)'\;\text{such that}\\
%%     & ( \Id + (\Pi-i\Id)(\mS-i\Id)/2)\bp = (\Id-i\Pi)\bff/2.
%%   \end{aligned}
%% \end{equation}
Since $\Pi$ is self-adjoint in the scalar
product $\mT^{-1}$, the adjoint of $(\Pi-i\Id)/\sqrt{2}$ with respect to
$\mT^{-1}$ is $(\Pi+i\Id)/\sqrt{2}$ and a direct calculation shows that it
is its inverse $\lbrack (\Pi-i\Id)/\sqrt{2}\rbrack\cdot \lbrack
(\Pi+i\Id)/\sqrt{2}\rbrack = \Id$, hence $(\Pi-i\Id)/\sqrt{2}$ is unitary
in the norm $\Vert \cdot\Vert_{\mT^{-1}}$. We also underline that, contrary
to $\Id+\Pi\mS$, the operator \eqref{eq:30} is not a priori
$\mT^{-1}$-coercive. On the other hand, because  $(\Pi-i\Id)/\sqrt{2}$  is
a $\mT^{-1}$-isometry, we have
\begin{equation}
  \begin{aligned}
    &\inf_{\bp\in \mbH(\Sigma)'\setminus\{0\}}
    \frac{\Vert \frac{1}{2} (\Id-i\Pi)(\Id+\Pi\mS)\bp\Vert_{\mT^{-1}}}{\Vert
      \bp\Vert_{\mT^{-1}}}\\
    & = \frac{1}{\sqrt{2}}
    \inf_{\bp\in \mbH(\Sigma)'\setminus\{0\}}
    \frac{\Vert(\Id+\Pi\mS)\bp\Vert_{\mT^{-1}}}{\Vert
      \bp\Vert_{\mT^{-1}}} = \frac{\gamma_{\star}}{\sqrt{2}}
  \end{aligned}
\end{equation}
Again because $(\Pi-i\Id)/\sqrt{2}$ is a $\mT^{-1}$-isometry, we have $
\sigma_j((\Pi-i\Id)(\mS-i\Id)/2)\leq \sigma_{j}(\mS-i\Id)/\sqrt{2}$.  Then,
we can again apply the result of Moret i.e Theorem
\ref{thm:effect-solut-strat} with $\mL = (\Id-i\Pi)(\Id+\Pi\mS)/2$ and
$\lambda = 1$ which yields the following estimate.
\begin{lem}\quad\\
  If $\bp_n\in \mbH(\Sigma)'$ refers to the $n$-th iterate obtained by
  applying GMRes to Equation \eqref{eq:31}, setting $\br_n = \bff -
  (\Id +\Pi\mS)\bp_n$ we have
  \begin{equation}\label{eq:1}
    \frac{\Vert \br_n\Vert_{\mT^{-1}}}{\Vert \br_0\Vert_{\mT^{-1}}}\leq
    \prod_{1\leq j\leq n}\frac{\sigma_j(\mS-i\Id)}{\gamma_\star}
  \end{equation}
\end{lem}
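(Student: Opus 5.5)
The plan is to reduce the statement to Theorem \ref{thm:effect-solut-strat} applied to the preconditioned operator $\mL := \frac{1}{2}(\Id-i\Pi)(\Id+\Pi\mS) = \Id + (\Pi-i\Id)(\mS-i\Id)/2$ with $\lambda = 1$. The one point needing care is that the statement measures the \emph{unpreconditioned} residual $\br_n = \bff-(\Id+\Pi\mS)\bp_n$, whereas GMRes applied to \eqref{eq:31} minimizes the preconditioned residual. So the first step relates the two.

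\textbf{Step 1: residuals.} Set $\tilde\br_n := (\Id-i\Pi)\bff/2 - \mL\bp_n$. Then $\tilde\br_n = \frac{1}{2}(\Id-i\Pi)\br_n = \frac{1}{\sqrt2}\big(\frac{\Id-i\Pi}{\sqrt2}\big)\br_n$. As noted before the statement, $(\Pi-i\Id)/\sqrt{2}$ is $\mT^{-1}$-unitary. Since $\Id-i\Pi = -i(\Pi+i\Id)$ and $(\Pi+i\Id)/\sqrt2$ is the inverse of that unitary, $(\Id-i\Pi)/\sqrt2$ is also a $\mT^{-1}$-isometry. Hence $\Vert\tilde\br_n\Vert_{\mT^{-1}} = \Vert\br_n\Vert_{\mT^{-1}}/\sqrt2$ for every $n$, including $n=0$. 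It therefore suffices to bound $\Vert\tilde\br_n\Vert_{\mT^{-1}}/\Vert\tilde\br_0\Vert_{\mT^{-1}}$.

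\textbf{Step 2: polynomial characterization.} Exactly as in \eqref{eq:10}, with $\mL$ in place of $\Id+\Pi\mS$ and the same $\mT^{-1}$ scalar product, we have
$\Vert\tilde\br_n\Vert_{\mT^{-1}} = \min_{Q\in\mathbb{P}_n\lbrack X\rbrack,\,Q(0)=1}\Vert Q(\mL)\tilde\br_0\Vert_{\mT^{-1}}$.
The operator $\mL$ is invertible, being a product of the invertible operators $(\Id-i\Pi)/2$ and $\Id+\Pi\mS$ (the latter has inf-sup constant $\gamma_\star>0$). Theorem \ref{thm:effect-solut-strat} with $\lambda=1$ then gives
$\Vert\tilde\br_n\Vert/\Vert\tilde\br_0\Vert\leq\prod_{j=1}^n\sigma_j(\mL-\Id)\,\sigma_j(\mL^{-1})$.

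\textbf{Step 3: singular value bounds.} First, $\mL-\Id = \frac{\Pi-i\Id}{\sqrt2}\cdot\frac{\mS-i\Id}{\sqrt2}$. Composing with an isometry does not increase approximation numbers, since a rank $<j$ approximant of $\mS-i\Id$ is carried to one of $\mL-\Id$ with the same error. This gives $\sigma_j(\mL-\Id)\leq\sigma_j(\mS-i\Id)/\sqrt2$. Second, by the crude estimate $\sigma_j(\mL^{-1})\leq\Vert\mL^{-1}\Vert$ and the inf-sup computation preceding the statement, we get $\sigma_j(\mL^{-1})\leq\sqrt2/\gamma_\star$. Multiplying, each factor is at most $\sigma_j(\mS-i\Id)/\gamma_\star$, and combining with Step 1 yields \eqref{eq:1}.

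The main obstacle, and the only non-routine point, is Step 1. One must make sure the minimization performed by GMRes on \eqref{eq:31} really controls the unpreconditioned residual appearing in the statement, and the isometry property of $(\Id-i\Pi)/\sqrt2$ in $\Vert\cdot\Vert_{\mT^{-1}}$ is precisely what makes the ratio invariant. Everything else is a direct transcription of the argument already used for \eqref{eq:27}.
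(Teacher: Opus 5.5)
Your proposal is correct and follows the paper's own argument. You apply Moret's theorem with $\mL = (\Id-i\Pi)(\Id+\Pi\mS)/2$ and $\lambda=1$, bound $\sigma_j(\mL-\Id)\leq\sigma_j(\mS-i\Id)/\sqrt{2}$ via the $\mT^{-1}$-isometry, and bound $\sigma_j(\mL^{-1})\leq\sqrt{2}/\gamma_\star$ via the inf-sup computation; your Step 1, noting that $(\Id-i\Pi)/\sqrt{2}$ is a $\mT^{-1}$-isometry so the preconditioned and unpreconditioned residual ratios coincide, usefully makes explicit a point the paper leaves implicit.
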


\quad\\
Here we also have $\lim_{n\to \infty}\sigma_{n}(\mS-i\Id)/\gamma_\star = 0$
which yields superlinear convergence of GMRes. But this time, compared to
\eqref{eq:27}, only $\gamma_\star$ is involved instead of
$\gamma_\star^{4}$ which suggest that applying GMRes to \eqref{eq:31} leads
to a convergence that is less sensitive to a deterioration of the inf-sup
constant $\gamma_\star$. As a natural corollary, we have a sharper estimate
when compact perturbations are in a Schatten $p$-class. 

\begin{cor}\quad\\
  Assume that $\mA-\mA_+:\mbH(\Omega)\to \mbH^1(\Omega)'$ and
  $\mT-\mT_+:\mbH(\Sigma)\to \mbH(\Sigma)'$ belong to the Schatten
  $p$-class for some $p\geq 1$. If $\bp_n\in \mbH(\Sigma)'$ refers to the
  $n$-th iterate obtained by applying GMRes to \eqref{eq:9}, setting
  $\br_n = \bff - (\Id +\Pi\mS)\bp_n$ we have 
  \begin{equation}\label{eq:2}
    \frac{\Vert \br_n\Vert_{\mT^{-1}}}{\Vert \br_0\Vert_{\mT^{-1}}}\leq
    \Big(\frac{\Vert \mS-i\Id\Vert_{\mathcal{C}_p}}{\gamma_\star\,n^{1/p}}\Big)^n
  \end{equation}
\end{cor}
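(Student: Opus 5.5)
The plan is to combine the Moret-type estimate \eqref{eq:1} with Lemma \ref{sec:superl-conv-gmres-1} and with the Schatten bound of Lemma \ref{sec:superl-conv-gmres}.

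\textbf{Step 1: the preconditioned operator.} The preconditioned operator is $\mL = (\Id-i\Pi)(\Id+\Pi\mS)/2$. By \eqref{eq:30}, $\mL-\Id = U(\mS-i\Id)/\sqrt{2}$ with $U=(\Pi-i\Id)/\sqrt{2}$. This $U$ is unitary for $\Vert\cdot\Vert_{\mT^{-1}}$.

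\textbf{Step 2: singular values of $\mL-\Id$.} Singular values are invariant under left composition with a unitary map: for any $\mL_j$ of rank $<j$, the map $U\mL_j$ also has rank $<j$, and $\Vert U(\cdot)\Vert=\Vert\cdot\Vert$. Hence $\sigma_j(\mL-\Id)=\sigma_j(\mS-i\Id)/\sqrt{2}$.

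\textbf{Step 3: singular values of $\mL^{-1}$.} The crude bound $\sigma_j(\mL^{-1})\leq\Vert\mL^{-1}\Vert=\sqrt{2}/\gamma_\star$ follows from the isometry computation preceding the previous lemma.

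\textbf{Step 4: apply Theorem \ref{thm:effect-solut-strat}.} Take $\lambda=1$. The $\sqrt{2}$ factors cancel and we recover \eqref{eq:1}:
\[
\prod_{j=1}^n\sigma_j(\mS-i\Id)/\gamma_\star .
\]

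\textbf{Step 5: Schatten estimate.} By Lemma \ref{sec:superl-conv-gmres}, the hypothesis on $\mA-\mA_+$ and $\mT-\mT_+$ ensures $\mS-i\Id\in\mathcal{C}_p$. Lemma \ref{sec:superl-conv-gmres-1} applied to $\mK=\mS-i\Id$ then gives
\[
\prod_{j=1}^n\sigma_j(\mS-i\Id)\leq (n^{-1/p}\Vert\mS-i\Id\Vert_{\mathcal{C}_p})^n .
\]
Dividing by $\gamma_\star^n$ yields \eqref{eq:2}.

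\textbf{Main obstacle: which residual is controlled.} GMRes applied to \eqref{eq:31} minimizes the preconditioned residual $\tilde\br_n=(\Id-i\Pi)\br_n/2$ over the Krylov space of $\mL$. The statement, however, concerns $\br_n=\bff-(\Id+\Pi\mS)\bp_n$. Since $(\Id-i\Pi)/2=U/\sqrt{2}$ with $U$ unitary, we have $\Vert\tilde\br_n\Vert_{\mT^{-1}}=\Vert\br_n\Vert_{\mT^{-1}}/\sqrt{2}$ for every $n$. The ratio $\Vert\br_n\Vert/\Vert\br_0\Vert$ therefore equals the preconditioned ratio, and the Moret bound transfers verbatim.

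I read the statement's ``applying GMRes to \eqref{eq:9}'' as referring to the preconditioned system \eqref{eq:31}, consistent with the preceding lemma. If the unpreconditioned iteration were really meant, this argument would not apply; the available bound would be \eqref{eq:29} instead.
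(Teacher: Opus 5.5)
Your proposal is correct and follows the paper's own route: combine \eqref{eq:1} (Moret's theorem with $\lambda=1$ applied to $(\Id-i\Pi)(\Id+\Pi\mS)/2$) with Lemma \ref{sec:superl-conv-gmres-1} for $\mK=\mS-i\Id$, whose membership in $\mathcal{C}_p$ comes from Lemma \ref{sec:superl-conv-gmres}, and your reading of ``GMRes applied to \eqref{eq:9}'' as GMRes on the preconditioned system \eqref{eq:31} is the intended one. One small slip: $(\Id-i\Pi)/2=-i(\Pi+i\Id)/2$ rather than $U/\sqrt{2}$, but $(\Pi+i\Id)/\sqrt{2}=U^{-1}$ is also $\mT^{-1}$-unitary, so the identity $\Vert\tilde\br_n\Vert_{\mT^{-1}}=\Vert\br_n\Vert_{\mT^{-1}}/\sqrt{2}$ and the rest of your argument are unaffected.
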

\quad\\
Estimate \eqref{eq:2} still depends on $\gamma_\star$. A way to filter out
this dependency and achieve robustness relies on a coarse space
construction.

\subsection{Refined preconditioner}
In the present section, we elaborate a refined variant of the
preconditioner studied in the previous section. The preconditioner will be
obtained by replacing $\mS$ by an approximation $\mS_r$ finer than $i\Id$.
The following bound quantifies how close to identity is the preconditioned
system.

\begin{lem}\label{lem:refin-prec}\quad\\
Consider any linear operator $\mS_r:\mbH(\Sigma)'\to \mbH(\Sigma)'$, denote
$\mu_r := \Vert \mS-\mS_r\Vert_{\mbH(\Sigma)'\to \mbH(\Sigma)'}$ and assume
that $\mu_r<\gamma_{\star}$. Then $\Id+\Pi\mS_r$ is invertible and
  \begin{equation}
    \begin{aligned}
      & \Vert \Id - (\Id+\Pi\mS_r)^{-1}(\Id+\Pi\mS)\Vert_{\mbH(\Sigma)'\to
        \mbH(\Sigma)'}\leq \mu_{r}/(\gamma_\star-\mu_r)\\
      & \Vert (\Id+\Pi\mS)^{-1}(\Id+\Pi\mS_r)\Vert_{\mbH(\Sigma)'\to
        \mbH(\Sigma)'}\leq 1 + \mu_r/\gamma_\star
    \end{aligned}
  \end{equation}
\end{lem}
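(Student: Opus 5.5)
The plan is a standard perturbation argument. Throughout, all operator norms are taken with respect to $\Vert\cdot\Vert_{\mT^{-1}}$ on $\mbH(\Sigma)'$. We use three facts established earlier: $\Pi$ is a $\mT^{-1}$-isometry; $\Id+\Pi\mS$ is invertible; and $\gamma_\star$ is its inf-sup constant from \eqref{eq:26}, so that $\Vert(\Id+\Pi\mS)^{-1}\Vert\leq 1/\gamma_\star$.

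\textbf{Step 1: invertibility of $\Id+\Pi\mS_r$.} Write
\[
\Id+\Pi\mS_r=(\Id+\Pi\mS)-\Pi(\mS-\mS_r)=(\Id+\Pi\mS)\big(\Id-\mE\big),
\qquad \mE:=(\Id+\Pi\mS)^{-1}\Pi(\mS-\mS_r).
\]
Since $\Pi$ is an isometry, $\Vert\Pi(\mS-\mS_r)\Vert=\mu_r$, hence $\Vert \mE\Vert\leq \mu_r/\gamma_\star<1$. A Neumann series then shows that $\Id-\mE$ is invertible, so $\Id+\Pi\mS_r$ is invertible. Alternatively, the bound $\Vert(\Id+\Pi\mS_r)\bp\Vert\geq(\gamma_\star-\mu_r)\Vert\bp\Vert$ gives injectivity with closed range, and surjectivity follows from the factorization.

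\textbf{Step 2: the first estimate.} From the factorization, $(\Id+\Pi\mS_r)^{-1}(\Id+\Pi\mS)=(\Id-\mE)^{-1}$. Therefore
\[
\Id-(\Id-\mE)^{-1}=-\mE(\Id-\mE)^{-1}.
\]
Its norm is at most $\Vert\mE\Vert/(1-\Vert\mE\Vert)\leq (\mu_r/\gamma_\star)/(1-\mu_r/\gamma_\star)=\mu_r/(\gamma_\star-\mu_r)$.

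The same bound can be reached more directly. The identity $\Id-(\Id+\Pi\mS_r)^{-1}(\Id+\Pi\mS)=(\Id+\Pi\mS_r)^{-1}\Pi(\mS_r-\mS)$ holds, and Step 1 gives $\Vert(\Id+\Pi\mS_r)^{-1}\Vert\leq 1/(\gamma_\star-\mu_r)$. Multiplying by $\Vert\Pi(\mS_r-\mS)\Vert=\mu_r$ yields the claim.

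\textbf{Step 3: the second estimate.} Here $(\Id+\Pi\mS)^{-1}(\Id+\Pi\mS_r)=\Id-\mE$, so its norm is at most $1+\Vert\mE\Vert\leq 1+\mu_r/\gamma_\star$.

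\textbf{Main obstacle.} The only delicate point is the norm convention. The bound $\Vert(\Id+\Pi\mS)^{-1}\Vert\leq 1/\gamma_\star$ and the identity $\Vert\Pi\bq\Vert=\Vert\bq\Vert$ both hold in the $\mT^{-1}$-norm, so $\mu_r$ must be measured in that same norm; this is how I read the statement. Everything else is routine Neumann-series algebra.
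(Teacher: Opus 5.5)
Your proof is correct and is essentially the paper's: the paper uses exactly your ``direct'' identity $\Id-(\Id+\Pi\mS_r)^{-1}(\Id+\Pi\mS)=(\Id+\Pi\mS_r)^{-1}\Pi(\mS_r-\mS)$ together with the lower bound $\Vert(\Id+\Pi\mS_r)\bp\Vert_{\mT^{-1}}\geq(\gamma_\star-\mu_r)\Vert\bp\Vert_{\mT^{-1}}$, and the identity $(\Id+\Pi\mS)^{-1}(\Id+\Pi\mS_r)=\Id+(\Id+\Pi\mS)^{-1}\Pi(\mS_r-\mS)$ for the second bound. Your factorization $\Id+\Pi\mS_r=(\Id+\Pi\mS)\big(\Id-(\Id+\Pi\mS)^{-1}\Pi(\mS-\mS_r)\big)$ with a Neumann series adds something the paper leaves implicit: the lower bound alone gives only injectivity and closed range, and your factorization also establishes surjectivity of $\Id+\Pi\mS_r$.
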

\noindent \textbf{Proof:}

We have $\Id - (\Id+\Pi\mS_r)^{-1}(\Id+\Pi\mS) =
(\Id+\Pi\mS_r)^{-1}\Pi(\mS-\mS_r)$ and, since $\Pi$ is a
$\mT^{-1}$-isometry, we obtain $\Vert \Id -
(\Id+\Pi\mS_r)^{-1}(\Id+\Pi\mS)\Vert_{\mbH(\Sigma)'\to \mbH(\Sigma)'}\leq
\mu_{r}\Vert(\Id+\Pi\mS_r)^{-1}\Vert_{\mbH(\Sigma)'\to
  \mbH(\Sigma)'}$. Then, for any $\bp\in \mbH(\Sigma)'$ we have $\Vert
(\Id+\Pi\mS_r)\bp\Vert_{\mT^{-1}} \geq \Vert
(\Id+\Pi\mS)\bp\Vert_{\mT^{-1}}- \Vert (\mS-\mS_r)\bp\Vert_{\mT^{-1}}\geq
(\gamma_\star-\mu_r)\Vert \bp\Vert_{\mT^{-1}}$. Dividing this inequality by
$\Vert \bp\Vert_{\mT^{-1}}$ and taking the infimum yields
$\Vert(\Id+\Pi\mS_r)^{-1}\Vert_{\mbH(\Sigma)'\to\mbH(\Sigma)'}\leq
1/(\gamma_\star-\mu_r)$. This yields the first estimate.
To obtain the second estimate, just write
$(\Id+\Pi\mS)^{-1}(\Id+\Pi\mS_r) = \Id + (\Id+\Pi\mS)^{-1}\Pi(\mS_r-\mS)$
and use the definition $\gamma_\star:= \Vert
(\Id+\Pi\mS)^{-1}\Vert_{\mbH(\Sigma)'\to \mbH(\Sigma)'}$. 
\hfill $\Box$

\quad\\
We propose to define the approximate scattering operator by $\mS_r =
i\Id+\mK_{r}$ where $\mK_{r}$ is a rank $r$ approximation of $\mS-i\Id$. An
optimal choice would consist in taking $\mK_{r}$ as the rank $r$ truncated
singular value decomposition of $\mS-i\Id$ which is the unique rank $r$
operator satisfying
\begin{equation}\label{eq:36}
  \Vert \mS-i\Id-\mK_r\Vert_{\mbH(\Sigma)'\to \mbH(\Sigma)'} = \sigma_{r+1}(\mS-i\Id).
\end{equation}
Recall that $\lim_{r\to \infty}\sigma_{r+1}(\mS-i\Id) = 0$ since $\mS-i\Id$
is compact, so the condition $\mu_r<\gamma_{\star}$ from Lemma
\ref{lem:refin-prec} will be fulfilled for $r$ sufficiently large.

With the construction above, where $\mS_r-i\Id$ has finite rank $r$, the
action of $(\Id+\Pi\mS_r)^{-1}$ may be computed as follows. Proceed as in
\S\ref{sec:first-order-prec} to establish that $(\Id -
i\Pi)(\Id+\Pi\mS_r)/2 = \Id+(\Pi-i\Id)(\mS_r-i\Id)/2$. From this and
\eqref{eq:30} we obtain the following identity
\begin{equation}\label{eq:33}
  \begin{aligned}
    & (\Id+\Pi\mS_r)^{-1}(\Id+\Pi\mS) =\\
    & \big(\Id+(\Pi-i\Id)(\mS_r-i\Id)/2\big)^{-1}  \big(\Id+(\Pi-i\Id)(\mS-i\Id)/2\big)
  \end{aligned}
\end{equation}
Next, since $\mS_r-i\Id$ has rank $r$ by our very construction, the
operator $(\Pi-i\Id)(\mS_r-i\Id)/2$ has rank $r$ as well, and
$\Id+(\Pi-i\Id)(\mS_r-i\Id)/2$ can be inverted by means of the Woodbury
formula \cite{MR38136,zbMATH04098606}, \cite[\S2.1.4]{zbMATH06159604}
\begin{equation}\label{eq:32}
  \begin{aligned}
    & (\Id+(\Pi-i\Id)(\mS_r-i\Id)/2)^{-1} = \Id -
    \mU_r(\Id+\mV_r\mU_r)^{-1}\mV_r\\ & \text{where}\;\;
    (\Pi-i\Id)(\mS_r-i\Id)/2 = \mU_r\cdot\mV_r
  \end{aligned}
\end{equation}
where $\mU_r:\CC^r\to \mbH(\Sigma)'$ and $\mV_r:\mbH(\Sigma)'\to \CC^r$ are
bounded maps of rank $r$. In Equation \eqref{eq:32}, we underline that
$\Id+\mV_r\mU_r\in \CC^{r\times r}$ so that, for a moderate value of $r$,
evaluating $(\Id+\mV_r\mU_r)^{-1}$ is computationally reasonnable.  Note
also that, for $\mu_r<\gamma_\star$, the operator
$\Id+(\Pi-i\Id)(\mS_r-i\Id)/2 = \Id+\mU_r\mV_r$ is invertible and, in this
case, the operator $\Id+\mV_r\mU_r$ is also invertible with
$(\Id+\mV_r\mU_r)^{-1} = \Id - \mV_r(\Id+\mU_r\mV_r)^{-1}\mU_r$ according
to Woodbury formula.

\quad\\
Using \eqref{eq:33} and \eqref{eq:32} to left multiply \eqref{eq:9} by
$(\Id + \Pi\mS_r)^{-1}$, leads to the preconditioned formulation
\begin{empheq}[box={\setlength{\fboxsep}{10pt}\colorbox{grisclair}}]{equation}\label{eq:34}
  \begin{aligned}
    & \text{Find}\;\bp\in \mbH(\Sigma)'\;\text{such that}\\
    & (\Id + \Pi\mS_r)^{-1}(\Id + \Pi\mS)\bp = (\Id + \Pi\mS_r)^{-1}\bff
  \end{aligned}
\end{empheq}
%% \begin{equation}\label{eq:34}
%%   \begin{aligned}
%%     & \text{Find}\;\bp\in \mbH(\Sigma)'\;\text{such that}\\
%%     & (\Id + \Pi\mS_r)^{-1}(\Id + \Pi\mS)\bp = (\Id + \Pi\mS_r)^{-1}\bff
%%   \end{aligned}
%% \end{equation}
This is the final formulation that we propose to consider for actual
solution of \eqref{eq:9}. We have the following convergence result for
GMRes.

\begin{prop}\quad\\
  Consider any bounded linear operator $\mS_r:\mbH(\Sigma)'\to
  \mbH(\Sigma)'$ and assume that $\sigma_1(\mS-\mS_r)<\gamma_{\star}$.  If $\bp_n\in
  \mbH(\Sigma)'$ refers to the $n$-th iterate obtained by applying GMRes to
  Equation \eqref{eq:34}, setting $\br_n = \bff-(\Id +\Pi\mS)\bp_n$ we have
  \begin{equation}\label{eq:35}
    \frac{\Vert \br_n\Vert_{\mT^{-1}}}{\Vert \br_0\Vert_{\mT^{-1}}}\leq
    \frac{2+\sigma_1(\mS-\mS_r)}{\gamma_\star-\sigma_1(\mS-\mS_r)} \prod_{1\leq j\leq
      n}\frac{\sigma_j(\mS-\mS_r)}{\gamma_\star}
    \Big(1+\frac{\sigma_j(\mS-\mS_r)}{\gamma_\star}\Big)
    \Big(1-\frac{\sigma_1(\mS-\mS_r)}{\gamma_\star}\Big)^{-1}
  \end{equation}
\end{prop}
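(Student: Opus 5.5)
The plan is to apply Moret's bound (Theorem \ref{thm:effect-solut-strat}) to the preconditioned operator
\[
\mL := (\Id+\Pi\mS_r)^{-1}(\Id+\Pi\mS)
\]
with $\lambda = 1$. Since $\sigma_1(\mS-\mS_r) = \Vert \mS-\mS_r\Vert = \mu_r<\gamma_\star$, Lemma \ref{lem:refin-prec} guarantees that $\Id+\Pi\mS_r$ is invertible, so $\mL$ is invertible. The argument has three stages: control the GMRes residual of the preconditioned system in terms of Moret's product, estimate the two families of singular values appearing there, and transfer the result back to the unpreconditioned residual $\br_n = \bff-(\Id+\Pi\mS)\bp_n$ measured in $\Vert\cdot\Vert_{\mT^{-1}}$.

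\textbf{Transfer of residuals.} GMRes on \eqref{eq:34} minimizes the preconditioned residual $\tilde\br_n := (\Id+\Pi\mS_r)^{-1}\br_n$, and $\tilde\br_n = Q(\mL)\tilde\br_0$ with $Q$ optimal. From this:
\begin{itemize}
\item $\Vert\br_n\Vert_{\mT^{-1}} \leq \Vert \Id+\Pi\mS_r\Vert\,\Vert\tilde\br_n\Vert_{\mT^{-1}}$, and $\Vert\Id+\Pi\mS_r\Vert\leq \Vert \Id+\Pi\mS\Vert + \mu_r \leq 2+\mu_r$, using that $\Pi$ is an isometry and $\mS$ a contraction.
\item $\Vert\tilde\br_0\Vert_{\mT^{-1}}\leq \Vert(\Id+\Pi\mS_r)^{-1}\Vert\,\Vert\br_0\Vert_{\mT^{-1}} \leq \Vert\br_0\Vert_{\mT^{-1}}/(\gamma_\star-\mu_r)$, by the estimate in the proof of Lemma \ref{lem:refin-prec}.
\end{itemize}
Together these produce the prefactor $(2+\sigma_1)/(\gamma_\star-\sigma_1)$, leaving Moret's product $\prod_j \sigma_j(\mL-\Id)\,\sigma_j(\mL^{-1})$ to be estimated.

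\textbf{Singular values of $\mL-\Id$.} Write $\mL-\Id = (\Id+\Pi\mS_r)^{-1}\Pi(\mS-\mS_r)$. Using $\sigma_j(\mA\mK)\leq\Vert\mA\Vert\sigma_j(\mK)$, the isometry of $\Pi$, and $\Vert(\Id+\Pi\mS_r)^{-1}\Vert\leq 1/(\gamma_\star-\mu_r)$, we get
\[
\sigma_j(\mL-\Id)\leq \frac{\sigma_j(\mS-\mS_r)}{\gamma_\star-\mu_r}
= \frac{\sigma_j(\mS-\mS_r)}{\gamma_\star}\Big(1-\frac{\mu_r}{\gamma_\star}\Big)^{-1},
\]
which produces the first and last factors in \eqref{eq:35}.

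\textbf{Singular values of $\mL^{-1}$.} We have $\mL^{-1} = (\Id+\Pi\mS)^{-1}(\Id+\Pi\mS_r) = \Id + (\Id+\Pi\mS)^{-1}\Pi(\mS_r-\mS)$. The crude bound $\sigma_j(\mL^{-1})\leq\Vert\mL^{-1}\Vert\leq 1+\mu_r/\gamma_\star$ from Lemma \ref{lem:refin-prec} is not sharp enough: the target has the $j$-dependent factor $1+\sigma_j/\gamma_\star$. I would instead use the Fan-type inequality $\sigma_j(\Id+\mK)\leq 1+\sigma_j(\mK)$, which follows from subadditivity of approximation numbers, $\sigma_{i+k-1}(A+B)\leq\sigma_i(A)+\sigma_k(B)$. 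Combined with $\sigma_j(\mK)\leq \sigma_j(\mS-\mS_r)/\gamma_\star$ for $\mK=(\Id+\Pi\mS)^{-1}\Pi(\mS_r-\mS)$, this gives $\sigma_j(\mL^{-1})\leq 1+\sigma_j(\mS-\mS_r)/\gamma_\star$.

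\textbf{Main obstacle.} The delicate step is the singular-value bound for $\mL^{-1}$: subadditivity with $i=1$ naively gives $\sigma_j(\Id+\mK)\leq \sigma_1(\Id)+\sigma_j(\mK) = 1+\sigma_j(\mK)$. Since $\sigma_1(\Id)=1$, this is exactly what is needed, provided index shifts are handled correctly. I would verify this carefully against the definition of $\sigma_j$ via rank-$(j-1)$ approximations: take $\mK_{j}$ of rank $<j$ with $\Vert\mK-\mK_j\Vert\approx\sigma_j(\mK)$; then $\Vert\Id+\mK-\mK_j\Vert\leq 1+\sigma_j(\mK)$, which gives the bound directly. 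Multiplying the three estimates then yields \eqref{eq:35}.
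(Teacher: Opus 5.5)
Your proposal is correct and follows essentially the same route as the paper. Both arguments use the same residual transfer, which gives the prefactor $(2+\sigma_1(\mS-\mS_r))/(\gamma_\star-\sigma_1(\mS-\mS_r))$, and then apply Moret's theorem with $\lambda=1$ to $(\Id+\Pi\mS_r)^{-1}(\Id+\Pi\mS)$, bounding the singular values of $L-\Id$ and $L^{-1}$ exactly as you do. Your explicit justification of $\sigma_j(\Id+\mK)\leq 1+\sigma_j(\mK)$ via a rank-$(j-1)$ approximant of $\mK$ fills in a step that the paper uses without comment.
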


\noindent
\textbf{Proof:}

Denote $\mu_r = \sigma_1(\mS-\mS_r) = \Vert
\mS-\mS_r\Vert_{\mbH(\Sigma)'\to\mbH(\Sigma)'}$. Set $\tilde{\br}_n:= (\Id
+ \Pi\mS_r)^{-1}(\bff - (\Id +\Pi\mS)\bp_n) = (\Id + \Pi\mS_r)^{-1}\br_n$
which are the remainders stemming from GMRes applied to Equation
\eqref{eq:34}. For any $\bp\in \mbH(\Sigma)'$, we have
$(\gamma_\star-\mu_r)\Vert \bp\Vert_{\mT^{-1}}\leq \Vert (\Id +
\Pi\mS_r)\bp\Vert_{\mT^{-1}}\leq (2+\mu_r)\Vert \bp\Vert_{\mT^{-1}}$ hence,
taking $\bp = \tilde{\br}_n$, we deduce
\begin{equation*}
  \frac{\Vert \br_n\Vert_{\mT^{-1}}}{\Vert \br_0\Vert_{\mT^{-1}}}\leq
  \Big(\frac{2+\mu_r}{\gamma_\star-\mu_r}\Big)\frac{\Vert
    \tilde{\br}_n\Vert_{\mT^{-1}}}{\Vert \tilde{\br}_0\Vert_{\mT^{-1}}}
\end{equation*}
Next, let us apply Theorem \ref{thm:effect-solut-strat} to Equation
\eqref{eq:34} with $L = (\Id + \Pi\mS_r)^{-1}(\Id + \Pi\mS)$ and $\lambda =
1$. Then we have $L-\Id = (\Id + \Pi\mS_r)^{-1}\Pi (\mS-\mS_r)$ so we
deduce $\sigma_j(L-\Id)\leq \sigma_{j}(\mS-\mS_r)\Vert (\Id +
\Pi\mS_r)^{-1}\Vert_{\mbH(\Sigma)'\to \mbH(\Sigma)'}\leq
\sigma_{j}(\mS-\mS_r)/(\gamma_\star-\mu_r)$. On the other hand, we also
have $L^{-1} = (\Id + \Pi\mS)^{-1}(\Id + \Pi\mS_r) = \Id + (\Id +
\Pi\mS)^{-1}\Pi(\mS_r-\mS)$ so we have $\sigma_j(L^{-1}) \leq 1+
\sigma_j(\mS-\mS_r) \Vert (\Id + \Pi\mS)^{-1}\Vert_{\mbH(\Sigma)'\to
  \mbH(\Sigma)'}\leq 1+\sigma_j(\mS-\mS_r)/\gamma_\star$ which leads to the
desired result. \hfill $\Box$

\quad\\ It is clear from the above discussion, and in particular from
\eqref{eq:35}, that choosing $\mS_r$ as close as possible to $\mS$ will
improve convergence of GMRes applied to the preconditioned equation
\eqref{eq:34}. Assuming that $\mS_r-i\Id$ has rank $r$, the best choice
that can be made is obtained by defining $\mS_r-i\Id$ as the truncated
rank-$r$ singular value decomposition of $\mS-i\Id$ i.e. by setting
$\mS_r:=i\Id+\mK_r$ where $\mK_r$ satisfies \eqref{eq:36}. In this case, we
have $\sigma_{j}(\mS-\mS_r) = \sigma_{r+j}(\mS-i\Id)$ and Estimate
\eqref{eq:35} writes
\begin{equation*}
  \begin{aligned}
    &\frac{\Vert \br_n\Vert_{\mT^{-1}}}{\Vert
      \br_0\Vert_{\mT^{-1}}}\leq\\ &\frac{2+\sigma_{r+1}(\mS-i\Id)}{\gamma_\star-\sigma_{r+1}(\mS-i\Id)}
    \prod_{1\leq j\leq n}\frac{\sigma_{r+j}(\mS-i\Id)}{\gamma_\star}
    \Big(1+\frac{\sigma_{r+j}(\mS-i\Id)}{\gamma_\star}\Big)\Big(1-\frac{\sigma_{r+1}(\mS-i\Id)}{\gamma_\star}\Big)^{-1}
  \end{aligned}
\end{equation*}
This estimate is more complicated than \eqref{eq:2} but appears more
favorable because it involves $\sigma_{r+j}(\mS-i\Id)$ instead of
$\sigma_{j}(\mS-i\Id)$. The parameter $r$ can be adjusted so as to
compensate a deterioration of the decay rate stemming from $\gamma_\star$
approaching $0$. For example, choosing $r$ sufficiently large so that  
$\sigma_{r+1}(\mS-i\Id)\leq \gamma_\star/2$ leads to
$\Vert \br_n\Vert_{\mT^{-1}}/\Vert \br_0\Vert_{\mT^{-1}}\leq
(1+2/\gamma_\star)\Pi_{j=1}^{n}(3\sigma_{r+j}(\mS-i\Id)/\gamma_{\star})$.

\quad\\
In the case where $\mS-i\Id$ belongs to some Schatten $p$-class, the
estimate above leads to enhanced superlinear convergence. The following
corollary is readily obtained by combining Equation \eqref{eq:35} with
Lemma \ref{sec:superl-conv-gmres-1}.
\begin{cor}\quad\\
  Assume that $\mA-\mA_+:\mbH(\Omega)\to \mbH^1(\Omega)'$ and
  $\mT-\mT_+:\mbH(\Sigma)\to \mbH(\Sigma)'$ belong to the Schatten
  $p$-class for some $p\geq 1$. If $\bp_n\in \mbH(\Sigma)'$ refers to the
  $n$-th iterate obtained by applying GMRes to \eqref{eq:34}, setting
  $\br_n = \bff - (\Id +\Pi\mS)\bp_n$ we have
  \begin{equation*}
    \frac{\Vert \br_n\Vert_{\mT^{-1}}}{\Vert \br_0\Vert_{\mT^{-1}}}\leq
    \Big(\frac{2+\sigma_1(\mS-\mS_r)}{\gamma_\star-\sigma_1(\mS-\mS_r)}\Big)
    \Big(\big(\frac{\Vert
      \mS-\mS_r\Vert_{\mathcal{C}_p}}{\gamma_\star\,n^{1/p}}\big) +
    \big(\frac{\Vert
      \mS-\mS_r\Vert_{\mathcal{C}_p}}{\gamma_\star\,n^{1/p}}\big)^{2}\Big)^n
    \Big(1-\frac{\sigma_1(\mS-\mS_r)}{\gamma_\star}\Big)^{-n}.
  \end{equation*}
\end{cor}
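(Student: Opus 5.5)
Starting from Estimate \eqref{eq:35} of the preceding proposition, we bound the product $\prod_{1\leq j\leq n}$ factor by factor and then apply Lemma \ref{sec:superl-conv-gmres-1}. Write $\sigma_j := \sigma_j(\mS-\mS_r)$ and $\mu_r := \sigma_1$. The prefactor $(2+\mu_r)/(\gamma_\star-\mu_r)$ and the factor $(1-\mu_r/\gamma_\star)^{-n}$ are carried over unchanged. The remaining product is
\begin{equation*}
\prod_{1\leq j\leq n}\Big(\frac{\sigma_j}{\gamma_\star}+\frac{\sigma_j^2}{\gamma_\star^2}\Big).
\end{equation*}

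First we check that the Schatten assumption actually concerns $\mS-\mS_r$. By Lemma \ref{sec:superl-conv-gmres}, the hypotheses on $\mA-\mA_+$ and $\mT-\mT_+$ give $\mS-i\Id\in\mathcal{C}_p$. For the statement to make sense, $\mS_r-i\Id$ must also be in $\mathcal{C}_p$; this holds for the finite rank choice $\mS_r = i\Id+\mK_r$. Hence $\mS-\mS_r\in\mathcal{C}_p$, and $\Vert\mS-\mS_r\Vert_{\mathcal{C}_p}<\infty$. If $\mS_r$ is arbitrary, we simply read the right-hand side as $+\infty$ when this norm is infinite.

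The key step is an arithmetic–geometric mean bound on the product. Since every factor is nonnegative,
\begin{equation*}
\prod_{j=1}^n\Big(\frac{\sigma_j}{\gamma_\star}+\frac{\sigma_j^2}{\gamma_\star^2}\Big)
\leq\Big(\frac1n\sum_{j=1}^n\frac{\sigma_j}{\gamma_\star}+\frac1n\sum_{j=1}^n\frac{\sigma_j^2}{\gamma_\star^2}\Big)^n .
\end{equation*}
We bound the two sums separately, as in the proof of Lemma \ref{sec:superl-conv-gmres-1}.

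For the linear sum, H\"older's inequality gives $\frac1n\sum_{j\le n}\sigma_j\leq n^{-1/p}\Vert\mS-\mS_r\Vert_{\mathcal{C}_p}$.

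For the quadratic sum, the target is $\frac1n\sum_{j\le n}\sigma_j^2\le (n^{-1/p}\Vert\cdot\Vert_{\mathcal{C}_p})^2$. This is the step needing care. When $p\geq2$, H\"older's inequality with exponent $p/2$ gives $\frac1n\sum_{j\le n}\sigma_j^2\leq n^{-2/p}\big(\sum_{j\le n}\sigma_j^p\big)^{2/p}$ directly.

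When $1\leq p<2$, H\"older does not apply in that direction, so we argue through the decay of the singular values. Since they are nonincreasing, $j\sigma_j^p\leq\sum_{i\le j}\sigma_i^p\leq\Vert\cdot\Vert_{\mathcal{C}_p}^p$, so $\sigma_j\le j^{-1/p}\Vert\cdot\Vert_{\mathcal{C}_p}$. One may then try to bound $\sum_{j\le n}\sigma_j^2$ by $n^{1-2/p}\Vert\cdot\Vert_{\mathcal{C}_p}^2$. However, $\sum_j j^{-2/p}$ is bounded, not of order $n^{1-2/p}$, so this inequality can fail for small $n$.

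The fallback is a cruder bound that avoids AM–GM on the quadratic part. Split each factor as $(\sigma_j/\gamma_\star)(1+\sigma_j/\gamma_\star)$ and use $\prod_j(1+\sigma_j/\gamma_\star)\le(1+\frac1n\sum_j\sigma_j/\gamma_\star)^n$. Combining this with Lemma \ref{sec:superl-conv-gmres-1} for $\prod_j\sigma_j/\gamma_\star$ gives
\begin{equation*}
\prod_{j=1}^n\Big(\frac{\sigma_j}{\gamma_\star}+\frac{\sigma_j^2}{\gamma_\star^2}\Big)\leq\big(x(1+x)\big)^n=(x+x^2)^n,\qquad x:=\frac{\Vert\mS-\mS_r\Vert_{\mathcal{C}_p}}{\gamma_\star n^{1/p}} .
\end{equation*}
This is exactly the stated bracket and works for every $p\ge1$, so it should be the main route; the AM–GM approach above is only an alternative for $p\geq2$.

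Inserting $(x+x^2)^n$ together with the two unchanged factors into \eqref{eq:35} gives the corollary.
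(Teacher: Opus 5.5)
Your main route is correct and coincides with what the paper means by ``combining \eqref{eq:35} with Lemma \ref{sec:superl-conv-gmres-1}''. You split the product as $\prod_{j}\frac{\sigma_j}{\gamma_\star}\cdot\prod_j\bigl(1+\frac{\sigma_j}{\gamma_\star}\bigr)\le x^n(1+x)^n$, bounding the first factor by the lemma and the second by AM--GM followed by the same H\"older step as in the lemma's proof. You are also right to discard AM--GM on the whole factor $\frac{\sigma_j}{\gamma_\star}+\frac{\sigma_j^2}{\gamma_\star^2}$: the quadratic bound it needs fails for $p<2$, already for a rank-one operator and every $n\geq 2$, so not only for small $n$ as you wrote.
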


\bibliography{manuscrit}
\bibliographystyle{plain}

\end{document}